\documentclass[12pt]{amsart}
\usepackage{a4wide}
\usepackage{graphicx,eepic, bm, times}
\usepackage{amsthm,amsmath,amsfonts,amssymb}
\usepackage{multirow}



\newtheorem{lemma}{Lemma}[section]

\newtheorem{prop}[lemma]{Proposition}
\newtheorem{thm}[lemma]{Theorem}
\newtheorem{cor}[lemma]{Corollary}
\theoremstyle{definition}

\newtheorem{example}[lemma]{Example}

\theoremstyle{remark}
\newtheorem{remark}[lemma]{Remark}

\numberwithin{equation}{section} \numberwithin{table}{section}

\begin{document}

\title[Universal and periodic $\beta$-expansions]{On universal and periodic $\beta$-expansions, and the Hausdorff dimension of the set of all expansions}
\author{Simon Baker}
\address{School of Mathematics, The University of Manchester,
Oxford Road, Manchester M13 9PL, United Kingdom. E-mail:
simon.baker@manchester.ac.uk}

\date{\today}
\subjclass[2010]{37A45, 37C45}
\keywords{Beta-expansion, non-integer base} \begin{abstract}
In this paper we study the topology of a set naturally arising from the study of $\beta$-expansions. After proving several elementary results for this set we study the case when our base is Pisot. In this case we give necessary and sufficient conditions for this set to be finite. This finiteness property will allow us to generalise a theorem due to Schmidt and will provide the motivation for sufficient conditions under which the growth rate and Hausdorff dimension of the set of $\beta$-expansions are equal and explicitly calculable.
\end{abstract}
\maketitle
\section{Introduction}
Let $m\in\mathbb{N},$ $\beta\in(1,m+1]$ and $I_{\beta,m}=[0,\frac{m}{\beta-1}].$ We call a sequence $(\epsilon_{i})_{i=1}^{\infty}\in\{0,\ldots,m\}^{\mathbb{N}}$ a \textit{$\beta$-expansion} for $x$ if $$\sum_{i=1}^{\infty}\frac{\epsilon_{i}}{\beta^{i}}=x.$$ It is a simple exercise to show that $x$ has a $\beta$-expansion if and only if $x\in I_{\beta,m}.$ For $x\in I_{\beta,m}$ we denote the set of $\beta$-expansions for $x$ by $\Sigma_{\beta,m}(x)$, i.e., $$\Sigma_{\beta,m}(x)=\Big\{(\epsilon_{i})_{i=1}^{\infty}\in \{0,\ldots,m\}^{\mathbb{N}} : \sum_{i=1}^{\infty}\frac{\epsilon_{i}}{\beta^{i}}=x\Big\}.$$ It is a well known property that for $\beta\in(1,m+1)$ a point $x\in(0,\frac{m}{\beta-1})$ will typically have a non-unique $\beta$-expansion, see \cite{Baker,Erdos,GlenSid,KoLiDe,Sidorov1,Sidorov4}. The following set was introduced in \cite{FengSid} 
\begin{align*}
\mathcal{E}_{\beta,m,n}(x)=\Big\{&(\epsilon_{1},\ldots,\epsilon_{n})\in\{0,\ldots,m\}^{n}|\exists (\epsilon_{n+1}, \epsilon_{n+2}, \ldots)\in \{0,\ldots, m\}^{\mathbb{N}}\\
&:\sum_{i=1}^{\infty}\frac{\epsilon_{i}}{\beta^{i}}=x\Big\},
\end{align*} 
we refer to an element of $\mathcal{E}_{\beta,m,n}(x)$ as an \textit{$n$-prefix} for $x$. In what follows we fix the map $T_{\beta,i}(x)=\beta x-i$ for $i\in\{0,\ldots,m\}.$ Moreover, we let
$$\Omega_{\beta,m}(x)=\Big\{(a_{i})_{i=1}^{\infty}\in \{T_{\beta,0},\ldots, T_{\beta,m}\}^{\mathbb{N}}:(a_{n}\circ a_{n-1}\circ \ldots \circ a_{1})(x)\in I_{\beta,m}
 \textrm{ for all } n\in\mathbb{N}\Big\}$$
\noindent and
$$\Omega_{\beta,m,n}(x)=\Big\{(a_{i})_{i=1}^{n}\in \{T_{\beta,0},\ldots,T_{\beta,m}\}^{n}:(a_{n}\circ a_{n-1}\circ \ldots \circ a_{1})(x)\in I_{\beta,m} \Big\},$$for each $n\in\mathbb{N}$. For our purposes it is also useful to define $\Omega_{\beta,m,0}(x)$ to be the set consisting of the identity map. Typically we will denote an element of $\Omega_{\beta,m,n}(x)$ or any finite sequence of maps by $a$. When we want to emphasise the length of $a$ we will use the notation $a^{(n)}$. We also adopt the notation $a^{(n)}(x)$ to mean $(a_{n}\circ a_{n-1}\circ \ldots \circ a_{1})(x).$ The following technical lemma will be useful.
\begin{lemma}
\label{Bijection lemma}
\begin{enumerate}
	\item $Card(\mathcal{E}_{\beta,m,n}(x))=Card(\Omega_{\beta,m,n}(x)),$ where our bijection identifies $(\epsilon_{i})_{i=1}^{n}$ with $(T_{\beta,\epsilon_{i}})_{i=1}^{n}.$
	\item $Card(\Sigma_{\beta,m}(x))=Card(\Omega_{\beta,m}(x)),$ where our bijection identifies $(\epsilon_{i})_{i=1}^{\infty}$ with $(T_{\beta,\epsilon_{i}})_{i=1}^{\infty}.$
	\item A finite block $(\epsilon_{1},\ldots,\epsilon_{n})$ of elements from $\{0,\ldots,m\}$ appears in a $\beta$-expansion for $x$ if and only if there exists a finite sequence of maps $a,$ such that $(T_{\beta,\epsilon_{n}}\circ\ldots\circ T_{\beta,\epsilon_{1}}\circ a)(x)\in I_{\beta,m}.$
\end{enumerate}
\end{lemma}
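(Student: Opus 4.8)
The plan is to reduce all three parts to a single algebraic identity. First I would check, by an immediate induction on $n$, that if $a_i = T_{\beta,\epsilon_i}$ and $a^{(n)} = a_n\circ\cdots\circ a_1$ then
$$a^{(n)}(x) = \beta^n x - \sum_{i=1}^{n}\epsilon_i\beta^{n-i} = \beta^n\Big(x - \sum_{i=1}^{n}\frac{\epsilon_i}{\beta^i}\Big).$$
Since $\beta^n>0$ and $I_{\beta,m}=[0,\frac{m}{\beta-1}]$, membership $a^{(n)}(x)\in I_{\beta,m}$ is then equivalent to
$$0\le x-\sum_{i=1}^{n}\frac{\epsilon_i}{\beta^i}\le \frac{m}{\beta^{n}(\beta-1)}.$$
The right-hand bound equals $\sum_{i=n+1}^{\infty}\frac{m}{\beta^i}$, and, by the ``simple exercise'' recalled in the introduction applied after scaling by $\beta^{n}$, the numbers $\sum_{i=n+1}^{\infty}\frac{\delta_i}{\beta^i}$ with $\delta_i\in\{0,\ldots,m\}$ fill exactly the interval $[0,\frac{m}{\beta^{n}(\beta-1)}]$. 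Consequently $(\epsilon_1,\ldots,\epsilon_n)$ can be completed to a $\beta$-expansion of $x$ if and only if $a^{(n)}(x)\in I_{\beta,m}$.

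This last statement is essentially part (1): I would note that $(\epsilon_i)_{i=1}^{n}\mapsto(T_{\beta,\epsilon_i})_{i=1}^{n}$ is injective (since $T_{\beta,i}=T_{\beta,j}$ forces $i=j$) and that every element of $\Omega_{\beta,m,n}(x)$ has the form $(T_{\beta,\epsilon_i})_{i=1}^{n}$, so the previous sentence says precisely that this map restricts to a bijection $\mathcal{E}_{\beta,m,n}(x)\to\Omega_{\beta,m,n}(x)$. For part (2) the same injectivity works coordinatewise, and the substance is the equivalence: $(\epsilon_i)_{i=1}^{\infty}\in\Sigma_{\beta,m}(x)$ if and only if $(\epsilon_1,\ldots,\epsilon_n)\in\mathcal{E}_{\beta,m,n}(x)$ for every $n$. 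One direction is trivial; for the other, the displayed inequality holding for all $n$ forces $\sum_{i\ge1}\epsilon_i/\beta^i=x$ on letting $n\to\infty$ (here $\beta>1$ is used). Feeding this through part (1) together with the definition of $\Omega_{\beta,m}(x)$ as the set of sequences all of whose initial composites lie in $I_{\beta,m}$ gives (2).

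For part (3) I would unwind the definition of ``appears'': $(\epsilon_1,\ldots,\epsilon_n)$ occurs in a $\beta$-expansion of $x$ exactly when $(\delta_1,\ldots,\delta_j,\epsilon_1,\ldots,\epsilon_n)\in\mathcal{E}_{\beta,m,j+n}(x)$ for some $j\ge0$ and $\delta_1,\ldots,\delta_j\in\{0,\ldots,m\}$. By part (1) this membership is equivalent to $(T_{\beta,\epsilon_n}\circ\cdots\circ T_{\beta,\epsilon_1}\circ a)(x)\in I_{\beta,m}$ with $a=T_{\beta,\delta_j}\circ\cdots\circ T_{\beta,\delta_1}$ (the identity map when $j=0$); since every finite sequence of maps arises in this way, ranging over all $j$ and all $\delta_i$ yields the claimed characterisation.

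I do not anticipate a real obstacle. The only step needing a little care is the bookkeeping identifying the set of possible tails $\sum_{i>n}\delta_i/\beta^i$ with the whole interval $[0,\frac{m}{\beta^{n}(\beta-1)}]$, together with the observation — which follows from the displayed inequality because the $\epsilon_i$ are non-negative — that although $\Omega_{\beta,m,n}(x)$ is defined by constraining only the final composite $a^{(n)}(x)$, this automatically forces $a^{(k)}(x)\in I_{\beta,m}$ for every $k\le n$, so the two natural readings of the definition coincide.
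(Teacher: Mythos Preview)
Your proof is correct and rests on the same algebraic identity $a^{(n)}(x)=\beta^{n}\bigl(x-\sum_{i=1}^{n}\epsilon_i/\beta^i\bigr)$ that the paper uses for part~(3). The paper simply cites another reference for parts~(1) and~(2) and writes out only the forward direction of part~(3) directly, whereas you give a self-contained argument for all three parts and derive part~(3) cleanly from part~(1); your added remark that the single endpoint constraint in the definition of $\Omega_{\beta,m,n}(x)$ automatically forces all intermediate composites into $I_{\beta,m}$ is a nice observation that the paper leaves implicit.
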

\begin{proof}
The proofs of statements $1$ and $2$ are contained in \cite{Baker}. To prove statement $3$ we replicate an argument given in \cite{FengSid}. Suppose $(\epsilon_{1},\ldots,\epsilon_{n})$ appears in a $\beta$-expansion for $x$, then there exists $N\in\mathbb{N}$ and $(\delta_{1},\ldots,\delta_{N})\in \{0,\ldots,m\}^{N}$
such that $$x-\sum_{i=1}^{N}\frac{\delta_{i}}{\beta^{i}}-\sum_{i=1}^{n}\frac{\epsilon_{i}}{\beta^{N+i}}\in\Big[0,\frac{m}{\beta^{N+n}(\beta-1)}\Big].$$ A simple manipulation yields that this is equivalent to $$\beta^{N+n}x-\sum_{i=1}^{N}\delta_{i}\beta^{N+n-i}-\sum_{i=1}^{n}\epsilon_{i}\beta^{n-i}\in I_{\beta,m}.$$ However $$\beta^{N+n}x-\sum_{i=1}^{N}\delta_{i}\beta^{N+n-i}-\sum_{i=1}^{n}\epsilon_{i}\beta^{n-i}= (T_{\beta,\epsilon_{n}}\circ\ldots \circ T_{\beta,\epsilon_{1}}\circ T_{\beta,\delta_{N}}\circ \ldots \circ T_{\beta,\delta_{1}})(x).$$ Our result follows immediately.

\end{proof}

With Lemma \ref{Bijection lemma} in mind we also refer to an element of $\Omega_{\beta,m,n}(x)$ as an $n$-prefix for $x$. Naturally arising from Lemma \ref{Bijection lemma} are the sets $$S_{\beta,m,n}(x)=\Big\{a(x): a\in \Omega_{\beta,m,n}(x)\Big\}$$ and $$S_{\beta,m}(x)=\bigcup_{n=0}^{\infty} S_{\beta,m,n}(x).$$ After proving several elementary results for $S_{\beta,m}(x)$ we will study the case when $\beta$ is Pisot. Our main result will be the following. 
\begin{thm}
\label{First theorem}
Let $\beta$ be a Pisot number, then $S_{\beta,m}(x)$ is finite if and only if $x\in \mathbb{Q}(\beta).$
\end{thm}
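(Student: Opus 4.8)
\emph{Strategy.} The statement splits into two implications. The direction ``$S_{\beta,m}(x)$ finite $\Rightarrow x\in\mathbb{Q}(\beta)$'' is straightforward and uses nothing about $\beta$ beyond $\beta>1$; the converse is where the Pisot hypothesis does the work, via a Garsia-type separation argument. Throughout I assume $x\in I_{\beta,m}$, as is implicit in the definitions of the sets involved (if $x\notin I_{\beta,m}$ then $\Sigma_{\beta,m}(x)=\emptyset$ and $S_{\beta,m}(x)=\{x\}$).

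\emph{The forward direction.} Assume $S_{\beta,m}(x)$ is finite. Since $x\in I_{\beta,m}$ we have $\Sigma_{\beta,m}(x)\ne\emptyset$, so fix $(a_i)_{i=1}^{\infty}\in\Omega_{\beta,m}(x)$. All truncations $a^{(n)}(x)$ lie in the finite set $S_{\beta,m}(x)$, so by the pigeonhole principle $a^{(n_1)}(x)=a^{(n_2)}(x)$ for some $n_1<n_2$. Writing $a_i=T_{\beta,\epsilon_i}$ and using $a^{(n)}(x)=\beta^{n}x-\sum_{i=1}^{n}\epsilon_i\beta^{n-i}$, this equality rearranges to $(\beta^{n_1}-\beta^{n_2})x=\sum_{i=1}^{n_2}\epsilon_i\beta^{n_2-i}-\sum_{i=1}^{n_1}\epsilon_i\beta^{n_1-i}\in\mathbb{Z}[\beta]$. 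As $\beta>1$ and $n_1\ne n_2$, the factor $\beta^{n_1}-\beta^{n_2}$ is a nonzero element of the field $\mathbb{Q}(\beta)$, and dividing yields $x\in\mathbb{Q}(\beta)$.

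\emph{The Pisot direction.} Suppose now $x\in\mathbb{Q}(\beta)$. Let $\beta$ have degree $d$ and conjugates $\beta=\beta_1,\dots,\beta_d$; since $\beta$ is Pisot we have $|\beta_j|<1$ for $j\ge2$, and since $\beta$ is an algebraic integer, $1,\beta,\dots,\beta^{d-1}$ is a $\mathbb{Z}$-basis of $\mathbb{Z}[\beta]$. Pick $q\in\mathbb{N}$ with $qx\in\mathbb{Z}[\beta]$. Every $y\in S_{\beta,m}(x)$ has the form $y=\beta^{n}x-p(\beta)$ with $p(t)=\sum_{i=1}^{n}\epsilon_i t^{\,n-i}$, $\epsilon_i\in\{0,\dots,m\}$, so $qy=\beta^{n}(qx)-q\,p(\beta)\in\mathbb{Z}[\beta]$, say $qy=\sum_{i=0}^{d-1}c_i\beta^{i}$ with $c_i\in\mathbb{Z}$. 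Applying each field embedding $\sigma_j\colon\mathbb{Q}(\beta)\to\mathbb{C}$, $\beta\mapsto\beta_j$, gives $\sigma_j(qy)=\beta_j^{\,n}\,q\sigma_j(x)-q\,p(\beta_j)$. For $j=1$ this is $qy\in[0,qm/(\beta-1)]$; for $j\ge2$, since $|\beta_j|<1$, we bound $|\beta_j^{\,n}q\sigma_j(x)|\le q|\sigma_j(x)|$ and $|q\,p(\beta_j)|\le qm\sum_{k\ge0}|\beta_j|^{k}=qm/(1-|\beta_j|)$, both uniform in $n$ and in the digits. Hence $|\sigma_j(qy)|\le C$ for all $j$ and all $y\in S_{\beta,m}(x)$, with $C$ depending only on $\beta,m,x$. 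Since the $\beta_j$ are distinct, the Vandermonde matrix $(\beta_j^{\,i})_{1\le j\le d,\,0\le i\le d-1}$ is invertible, so the integers $c_i$ are bounded $\mathbb{C}$-linear combinations of the $\sigma_j(qy)$, hence range over a finite subset of $\mathbb{Z}$. Therefore $\{qy:y\in S_{\beta,m}(x)\}$, and so $S_{\beta,m}(x)$, is finite.

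\emph{Main obstacle.} The only delicate point is in the Pisot direction: keeping the bound on the subdominant conjugates $\sigma_j(qy)$, $j\ge2$, uniform over \emph{all} prefixes of \emph{all} expansions. This is exactly where $|\beta_j|<1$ is used --- both to keep $\beta_j^{\,n}q\sigma_j(x)$ bounded as $n\to\infty$ and to make the tail $\sum\epsilon_i\beta_j^{\,n-i}$ absolutely summable --- and converting ``all conjugates bounded'' into ``finitely many values'' is then the discreteness of the lattice $\mathbb{Z}[\beta]$ under the conjugate (Minkowski) embedding, made explicit by the Vandermonde inversion. The forward direction, by contrast, is a one-line pigeonhole argument.
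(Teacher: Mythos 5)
Your proof is correct. The forward implication is essentially the paper's own argument: a pigeonhole repetition $a^{(n_1)}(x)=a^{(n_2)}(x)$ rearranged to show $(\beta^{n_1}-\beta^{n_2})x\in\mathbb{Z}[\beta]$. For the converse, however, you take a genuinely different route. The paper argues by contradiction: if $S_{\beta,m}(x)$ were infinite it would, being bounded, contain two distinct elements arbitrarily close together, and after clearing the denominator $n_1$ and collecting terms this would produce two distinct elements of $Y^{n_1 m+n_2}(\beta)$ within $\Delta(\beta,n_1m+n_2)$ of each other, contradicting Garsia's separation theorem (Theorem \ref{Garcia thm}). You instead argue directly: clear denominators so that $q\,S_{\beta,m}(x)\subset\mathbb{Z}[\beta]$, bound every Galois conjugate of $qy$ uniformly over all $y\in S_{\beta,m}(x)$ (the real place because $y\in I_{\beta,m}$, the others because $|\beta_j|<1$ makes both $\beta_j^{\,n}q\sigma_j(x)$ and the digit polynomial geometrically summable), and conclude via Vandermonde inversion that the integer coordinates of $qy$ in the basis $1,\beta,\dots,\beta^{d-1}$ take only finitely many values. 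This is in effect an inlining of the proof of Garsia's lemma, so the two arguments share the same mathematical core; the paper's version is shorter modulo the citation, while yours is self-contained, avoids the contradiction setup and the bookkeeping with the enlarged digit set $\{0,\dots,n_1m+n_2\}$, and makes explicit exactly where the Pisot hypothesis enters. Your remark that the theorem implicitly requires $x\in I_{\beta,m}$ is also a fair observation, consistent with how the paper uses the result.
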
 Recall that a Pisot number is a real algebraic integer greater than $1$ whose other Galois conjugates are of modulus strictly less than $1$. Using Theorem \ref{First theorem} we will show that for a general method of producing $\beta$-expansions the $\beta$-expansion generated for $x$ is eventually periodic if and only if $x\in\mathbb{Q}(\beta)$, generalising a theorem due to Schmidt \cite{Schmidt}. Theorem \ref{First theorem} will also provide the motivation for sufficient conditions under which the growth rate of $\beta$-expansions equals the Hausdorff dimension of the set of $\beta$-expansions, partially answering a question posed in \cite{Baker2}, moreover our method allows us to explicitly calculate these quantities. 

Before beginning our study of the sets $S_{\beta,m,n}(x)$ and $S_{\beta,m}(x)$ it is useful to recall the following, as we will see the subsequent theory will be important in understanding the topology of these sets.

To each $m\in\mathbb{N}$ and $\beta\in(1,m+1]$ we associate the set $$Y^{m}(\beta)=\Big\{\sum_{i=1}^{n} \epsilon_{i}\beta^{i}| \epsilon_{i}\in\{0,\ldots,m\}, n=0,1,\ldots \Big\}.$$ The elements of $Y^{m}(q)$ can be arranged into a strictly increasing sequence $y^{m}_{0}(\beta)<y^{m}_{1}(\beta)<y^{m}_{2}(\beta)<\ldots,$ tending to infinity. We define the quantities $$l^{m}(\beta)=\liminf_{k\to\infty} (y_{k+1}^{m}(\beta)-y_{k}^{m}(\beta)) \textrm{ and } L^{m}(\beta)=\limsup_{k\to\infty} (y_{k+1}^{m}(\beta)-y_{k}^{m}(\beta)).$$ 
These limits have been studied in great depth, to name but a few references we refer the reader to \cite{AkiKom,ErdosKomornik,Feng,SidSolom}. As we will see the quantities $l^{m}(\beta)$ and $L^{m}(\beta)$ will be intimately related to the topology of the sets $S_{\beta,m,n}(x)$ and $S_{\beta,m}(x).$

\section{Elementary Properties}
In this section we prove several elementary results relating the topology of $S_{\beta,m}(x)$ to the set of $\beta$-expansions. Following \cite{ErdosKomornik} we say that a $\beta$-expansion for $x$ is \textit{universal} if it contains all finite blocks of digits from $\{0,\ldots,m\}$. Similarly, we say that a point $x\in I_{\beta,m}$ is \textit{universal} if for any finite block of digits from $\{0,\ldots,m\}$ there exists a $\beta$-expansion for $x$ containing this block. The following propositions are immediate.

\begin{prop}
\label{Universal point}
$x\in I_{\beta,m}$ is universal if and only if $S_{\beta,m}(x)$ is dense in $I_{\beta,m}.$ 
\end{prop}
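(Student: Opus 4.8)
The plan is to prove both implications by exploiting the correspondence in Lemma \ref{Bijection lemma}(3), which translates ``the block $(\epsilon_1,\ldots,\epsilon_n)$ appears in some $\beta$-expansion of $x$'' into ``there is a finite sequence of maps $a$ with $(T_{\beta,\epsilon_n}\circ\cdots\circ T_{\beta,\epsilon_1}\circ a)(x)\in I_{\beta,m}$'', i.e. into a statement about membership in $S_{\beta,m}(x)$.

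First I would prove that if $x$ is universal then $S_{\beta,m}(x)$ is dense in $I_{\beta,m}$. Fix $y\in I_{\beta,m}$ and $\varepsilon>0$; the goal is to find a point of $S_{\beta,m}(x)$ within $\varepsilon$ of $y$. Choose $n$ large enough that $\frac{m}{\beta^{n}(\beta-1)}<\varepsilon$ and pick a finite block $(\epsilon_1,\ldots,\epsilon_n)\in\mathcal E_{\beta,m,n}(y)$, which exists since $y\in I_{\beta,m}$; then $\big|y-\sum_{i=1}^{n}\frac{\epsilon_i}{\beta^i}\big|\le\frac{m}{\beta^{n}(\beta-1)}<\varepsilon$. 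By universality of $x$ there is a $\beta$-expansion of $x$ containing the block $(\epsilon_1,\ldots,\epsilon_n)$, so by Lemma \ref{Bijection lemma}(3) there is a finite sequence of maps $a$ with $z:=(T_{\beta,\epsilon_n}\circ\cdots\circ T_{\beta,\epsilon_1}\circ a)(x)\in I_{\beta,m}$, and this $z$ lies in $S_{\beta,m}(x)$. Now I would observe that $T_{\beta,\epsilon_n}\circ\cdots\circ T_{\beta,\epsilon_1}$ acts on the preceding point $w:=a(x)\in I_{\beta,m}$ by $z=\beta^{n}w-\sum_{i=1}^{n}\epsilon_i\beta^{n-i}$, equivalently $w=\sum_{i=1}^{n}\frac{\epsilon_i}{\beta^i}+\frac{z}{\beta^{n}}$; since $w$ and the analogous expression with $y$ in place of $w$ (namely any $w'\in[\sum\frac{\epsilon_i}{\beta^i},\sum\frac{\epsilon_i}{\beta^i}+\frac{m}{\beta^n(\beta-1)}]$ hitting $y$) both lie in a window of length $\frac{m}{\beta^n(\beta-1)}$ around $\sum_{i=1}^n\frac{\epsilon_i}{\beta^i}$, we get $|w-y|<\varepsilon$. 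Feeding this the other way, since $z\in S_{\beta,m}(x)$ is a point whose $n$-digit ``decoding'' reproduces the prefix of $y$, one has $z$ is in fact itself an arbitrary point of $I_{\beta,m}$ reachable — more cleanly: $S_{\beta,m,n+\ell}(x)$ contains $\beta^n a(x)-\sum\epsilon_i\beta^{n-i}$ and as $a(x)$ ranges we are simply re-deriving that any $y$ is approximated; I would streamline this so that the single inequality $\frac{m}{\beta^n(\beta-1)}<\varepsilon$ does all the work.

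For the converse, suppose $S_{\beta,m}(x)$ is dense in $I_{\beta,m}$ and let $(\epsilon_1,\ldots,\epsilon_n)$ be an arbitrary finite block; I must produce a $\beta$-expansion of $x$ containing it. Consider the open interval $U=\big(\sum_{i=1}^{n}\frac{\epsilon_i}{\beta^i},\;\sum_{i=1}^{n}\frac{\epsilon_i}{\beta^i}+\frac{m}{\beta^{n}(\beta-1)}\big)\cap I_{\beta,m}$, or rather its preimage: a point $w\in I_{\beta,m}$ admits $(\epsilon_1,\ldots,\epsilon_n)$ as an $n$-prefix precisely when $\beta^n w-\sum_{i=1}^n\epsilon_i\beta^{n-i}\in I_{\beta,m}$, i.e. when $w\in J:=\big[\sum_{i=1}^n\frac{\epsilon_i}{\beta^i},\sum_{i=1}^n\frac{\epsilon_i}{\beta^i}+\frac{m}{\beta^n(\beta-1)}\big]\cap I_{\beta,m}$, a non-degenerate interval (here one checks $\sum\frac{\epsilon_i}{\beta^i}\le\frac{m}{\beta-1}$ so $J\ne\emptyset$ with non-empty interior). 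By density there is $a\in\Omega_{\beta,m,k}(x)$ with $a(x)\in J$; then $(T_{\beta,\epsilon_n}\circ\cdots\circ T_{\beta,\epsilon_1}\circ a)(x)\in I_{\beta,m}$, so by Lemma \ref{Bijection lemma}(3) the block $(\epsilon_1,\ldots,\epsilon_n)$ appears in a $\beta$-expansion of $x$; as the block was arbitrary, $x$ is universal.

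The only genuinely delicate point — and the one I would be careful about — is the bookkeeping at the endpoints: Lemma \ref{Bijection lemma}(3) gives membership in the \emph{closed} interval $I_{\beta,m}$, so to invoke density I need the target set $J$ to have non-empty interior rather than being a single point, which requires $\sum_{i=1}^n\frac{\epsilon_i}{\beta^i}<\frac{m}{\beta-1}$; when $\epsilon_1=\cdots=\epsilon_n=m$ this still holds strictly since $\beta\le m+1$ forces $\sum_{i=1}^n\frac{m}{\beta^i}<\frac{m}{\beta-1}$, so $J$ always contains a non-degenerate subinterval of $I_{\beta,m}$. Modulo this observation the argument is a direct two-way application of Lemma \ref{Bijection lemma}(3) together with the elementary estimate that an $n$-prefix pins a point down to a window of length $\frac{m}{\beta^n(\beta-1)}\to 0$.
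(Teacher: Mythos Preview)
Your approach is the same as the paper's: both directions run through Lemma \ref{Bijection lemma}(3) together with the fact that the set of points admitting a given $n$-prefix is an interval of length $\tfrac{m}{\beta^{n}(\beta-1)}$. Your set $J$ in the converse is exactly the paper's $I_{\epsilon}$, and your check that $J$ has non-empty interior is a point the paper leaves implicit.

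The forward direction, though, is tangled. The element of $S_{\beta,m}(x)$ that approximates $y$ is $w=a(x)$, not $z$: once you have $|w-y|\le \tfrac{m}{\beta^{n}(\beta-1)}<\varepsilon$ you are finished, provided you observe that the $a$ coming from Lemma \ref{Bijection lemma}(3) actually lies in some $\Omega_{\beta,m,k}(x)$ (it is the prefix of a $\beta$-expansion of $x$), so that $w\in S_{\beta,m}(x)$. The paragraph beginning ``Feeding this the other way'' is superfluous and should be dropped. The paper packages this step more cleanly by introducing the interval $\Gamma((\delta_i)_{i=1}^{N})=\{y\in I_{\beta,m}:(T_{\beta,\delta_N}\circ\cdots\circ T_{\beta,\delta_1})(y)\in I_{\beta,m}\}$ of diameter $\tfrac{m}{\beta^{N}(\beta-1)}$ containing the target point, and then reading off from Lemma \ref{Bijection lemma}(3) that $a(x)\in\Gamma\subset\mathcal I$.
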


\begin{proof}
Assume $x$ is universal and let $\mathcal{I}$ be a nontrivial subinterval of $I_{\beta,m}$. Let $z\in int(\mathcal{I})$ and $(\delta_{i})_{i=1}^{\infty}$ be a $\beta$-expansion for $z,$ we consider the set $$\Gamma((\delta_{i})_{i=1}^{N})=\Big\{y\in I_{\beta,m}| (T_{\beta,\delta_{N}}\circ\ldots \circ T_{\beta,\delta_{1}})(y)\in I_{\beta,m}\Big\}.$$ This set is an interval of diameter $\frac{m}{\beta^{N}(\beta-1)}$ containing $z,$ since $z\in int(\mathcal{I})$ we have $\Gamma((\delta_{i})_{i=1}^{N})\subset \mathcal{I}$ for $N$ sufficiently large. As $x$ is universal there exists a $\beta$-expansion for $x$ containing the digits $(\delta_{1},\ldots, \delta_{N}),$ by an application of Lemma \ref{Bijection lemma} there must exist a finite sequence of maps $a$ such that $a(x)\in \Gamma((\delta_{i})_{i=1}^{N}).$ Therefore $S_{\beta,m}(x)\cap \mathcal{I}\neq \emptyset,$ as $\mathcal{I}$ was arbitrary we may conclude that $S_{\beta,m}(x)$ is dense in $I_{\beta,m}.$

We now prove the opposite implication. Suppose $S_{\beta,m}(x)$ is dense and let $\epsilon=(\epsilon_{1},\ldots,\epsilon_{n})$ be a finite block of digits from $\{0,\ldots,m\}.$ Let $$I_{\epsilon}=\Big\{y\in I_{\beta,m}| (T_{\beta,\epsilon_{n}}\circ\ldots \circ T_{\beta,\epsilon_{1}})(y)\in I_{\beta,m}\Big\}.$$ As $S_{\beta,m}(x)$ is dense there exists a finite sequence of maps $a$ such that $a(x)\in I_{\epsilon},$ therefore $(T_{\beta,\epsilon_{n}}\circ\ldots \circ T_{\beta,\epsilon_{1}}\circ a)(x)\in I_{\beta,m},$ our result follows from Lemma \ref{Bijection lemma}.

\end{proof}
\begin{prop}
\label{Universal expansion}
If $x\in I_{\beta,m}$ has a universal $\beta$-expansion then there exists $a\in\Omega_{\beta,m}(x)$ such that $\{(a_{n}\circ \ldots \circ a_{1})(x)|  n=1,2,\ldots\}$ is dense in $I_{\beta,m}.$
\end{prop}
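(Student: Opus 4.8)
The plan is to construct the desired element of $\Omega_{\beta,m}(x)$ by concatenating finite prefixes, using the universal $\beta$-expansion as a ``menu'' of finite blocks that we can insert at will. Let $(\delta_i)_{i=1}^\infty$ be a universal $\beta$-expansion for $x$, and enumerate a countable dense subset $\{z_1,z_2,\ldots\}$ of $I_{\beta,m}$ (for instance, points with finite terminating $\beta$-expansions, or simply a sequence dense in $I_{\beta,m}$). For each $k$ we will arrange that the orbit of $x$ under the sequence of maps comes within $1/k$ of $z_k$ at some finite time. Since the orbit we build will ultimately still correspond to a genuine $\beta$-expansion for $x$, it will automatically lie in $\Omega_{\beta,m}(x)$ by Lemma~\ref{Bijection lemma}(2); the content is the density of the orbit.

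The main step is the following local move, which is essentially the mechanism already used in the proof of Proposition~\ref{Universal point}. Suppose we have committed to an initial sequence of maps $a^{(n)}$ and we wish to drive the orbit near a target $z$. Pick $w$ in the interior of a small interval around $z$ and a finite block $(\eta_1,\ldots,\eta_r)$ that is a prefix of some $\beta$-expansion of $w$; the cylinder set $\Gamma((\eta_i)_{i=1}^r)=\{y\in I_{\beta,m} : (T_{\beta,\eta_r}\circ\cdots\circ T_{\beta,\eta_1})(y)\in I_{\beta,m}\}$ is an interval of diameter $m/(\beta^r(\beta-1))$ containing $w$, hence contained in an arbitrarily small neighbourhood of $z$ once $r$ is large. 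By universality of $(\delta_i)$, the block $(\eta_1,\ldots,\eta_r)$ occurs somewhere in $(\delta_i)$, so by Lemma~\ref{Bijection lemma}(3) there is a finite sequence of maps $b$ with $(T_{\beta,\eta_r}\circ\cdots\circ T_{\beta,\eta_1}\circ b)(a^{(n)}(x))\in I_{\beta,m}$. Appending $b$ followed by $T_{\beta,\eta_1},\ldots,T_{\beta,\eta_r}$ to $a^{(n)}$ keeps us inside $\Omega_{\beta,m}$ and lands the orbit in $\Gamma((\eta_i)_{i=1}^r)$, hence within the prescribed distance of $z$.

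I would then iterate this move: having dealt with $z_1,\ldots,z_{k-1}$, apply the local move with target $z_k$ and tolerance $1/k$ to extend the current finite sequence of maps; this produces a nested, consistent sequence of finite prefixes whose union is an infinite sequence $a=(a_i)_{i=1}^\infty$. At each stage every partial composition lands in $I_{\beta,m}$, so $a\in\Omega_{\beta,m}(x)$. And by construction, for every $k$ the orbit $\{(a_n\circ\cdots\circ a_1)(x) : n\geq 1\}$ meets the $1/k$-neighbourhood of $z_k$; since $\{z_k\}$ is dense in $I_{\beta,m}$, the orbit is dense.

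The step I expect to require the most care is checking that the local move does not disturb what has already been achieved — i.e.\ that appending maps to the end of a finite prefix only adds new orbit points and never removes the good ones — and, relatedly, bookkeeping the fact that the $b$ in Lemma~\ref{Bijection lemma}(3) depends on the current endpoint $a^{(n)}(x)$ rather than on $x$ itself, so one must apply that lemma to the point $a^{(n)}(x)\in I_{\beta,m}$ in place of $x$. Everything else is a routine $\varepsilon/2^k$-style diagonalisation, and the diameter estimate for the cylinder $\Gamma$ is exactly the one already verified in Proposition~\ref{Universal point}.
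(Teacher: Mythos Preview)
Your inductive construction is more elaborate than what the paper has in mind, and as written the key step does not go through. The difficulty is exactly the one you flag at the end: at stage $k$ you want to apply Lemma~\ref{Bijection lemma}(3) to the current endpoint $a^{(n)}(x)$, but the hypothesis of that lemma is that the block $(\eta_1,\ldots,\eta_r)$ appears in some $\beta$-expansion of \emph{that} point. All you actually know is that it appears in the universal expansion $(\delta_i)$ of $x$. After the first local move your endpoint need no longer lie on the $(\delta_i)$-orbit of $x$, so you have no reason to believe any of its expansions contains the next block you want, and the induction stalls. Flagging this as ``bookkeeping'' understates it: it is the entire content of the step.

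The paper's intended argument avoids this by not building $a$ inductively at all. One simply takes $a=(T_{\beta,\delta_i})_{i=1}^\infty$, the map-sequence corresponding to the given universal expansion itself, and checks directly that its orbit is dense. Given a subinterval $\mathcal{I}$, choose $z\in\mathrm{int}(\mathcal{I})$ with expansion $(\eta_i)$, take $N$ large enough that $\Gamma((\eta_i)_{i=1}^N)\subset\mathcal{I}$, and use universality of $(\delta_i)$ to locate $(\eta_1,\ldots,\eta_N)$ at positions $M+1,\ldots,M+N$; then $(a_M\circ\cdots\circ a_1)(x)\in\Gamma((\eta_i)_{i=1}^N)\subset\mathcal{I}$. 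This is verbatim the mechanism from the first half of Proposition~\ref{Universal point}, which is why the paper omits the proof. Your argument can be repaired by insisting that every finite prefix $a^{(n)}$ be an initial segment of $(T_{\beta,\delta_i})$ --- then the endpoint inherits the universal tail $(\delta_{n+i})_{i\ge1}$ and Lemma~\ref{Bijection lemma}(3) applies --- but once you impose that, the diagonalisation over $\{z_k\}$ becomes superfluous and you are left with the direct argument above.
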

The proof of this statement follows by a similar argument to the first part of Proposition \ref{Universal point} and is therefore omitted.

\begin{prop}
Every $x\in (0,\frac{m}{\beta-1})$ has a universal $\beta$-expansion if and only if $S_{\beta,m}(x)$ is dense in $I_{\beta,m}$ for all $x\in(0,\frac{m}{\beta-1}).$
\end{prop}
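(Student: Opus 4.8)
The plan is to derive this from Proposition~\ref{Universal point}, which already identifies the property ``$x$ is universal'' with ``$S_{\beta,m}(x)$ is dense in $I_{\beta,m}$''. The forward implication is then immediate: if $x$ possesses a universal $\beta$-expansion, then that single expansion already contains every finite block of digits, so $x$ is a universal point, and Proposition~\ref{Universal point} gives that $S_{\beta,m}(x)$ is dense in $I_{\beta,m}$.

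For the converse I would run a concatenation argument. Assume $S_{\beta,m}(y)$ is dense in $I_{\beta,m}$ --- equivalently, by Proposition~\ref{Universal point}, $y$ is a universal point --- for every $y\in(0,\frac{m}{\beta-1})$. Fix $x\in(0,\frac{m}{\beta-1})$ and enumerate all finite blocks of digits from $\{0,\ldots,m\}$ as $B_{1},B_{2},\ldots$. Starting from the empty string $\pi_{0}$, I would inductively build a nested sequence of finite digit strings $\pi_{0}\subset\pi_{1}\subset\pi_{2}\subset\cdots$ so that, writing $\pi_{k}=(\epsilon_{1},\ldots,\epsilon_{j_{k}})$ and $z_{k}=(T_{\beta,\epsilon_{j_{k}}}\circ\cdots\circ T_{\beta,\epsilon_{1}})(x)$, the two invariants hold that $\pi_{k}$ contains each of $B_{1},\ldots,B_{k}$ as a subblock and that $z_{k}\in(0,\frac{m}{\beta-1})$. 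For the inductive step, since $z_{k}$ is a universal point it admits a $\beta$-expansion $(e_{i})_{i=1}^{\infty}$ containing the block obtained by following $B_{k+1}$ with the two digits $0$ and $m$; fixing such an occurrence, letting $p$ be the position in $(e_{i})_{i=1}^{\infty}$ of the last digit of the corresponding copy of $B_{k+1}$, and putting $d=(e_{1},\ldots,e_{p})$, a direct computation of the type carried out in the proof of Lemma~\ref{Bijection lemma} gives $(T_{\beta,e_{p}}\circ\cdots\circ T_{\beta,e_{1}})(z_{k})=\sum_{i=1}^{\infty}e_{p+i}\beta^{-i}$; this tail value begins with the digits $0,m$, so it lies strictly between $0$ and $\frac{m}{\beta-1}$. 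Hence $\pi_{k+1}:=\pi_{k}d$ again satisfies both invariants, and since $d$ is nonempty the lengths $j_{k}$ strictly increase. The infinite string $(\epsilon_{i})_{i=1}^{\infty}$ having every $\pi_{k}$ as a prefix satisfies $x=\sum_{i=1}^{j_{k}}\epsilon_{i}\beta^{-i}+\beta^{-j_{k}}z_{k}$ for each $k$; letting $k\to\infty$, so that $j_{k}\to\infty$ while $z_{k}$ stays bounded, yields $x=\sum_{i=1}^{\infty}\epsilon_{i}\beta^{-i}$, so $(\epsilon_{i})_{i=1}^{\infty}$ is a $\beta$-expansion of $x$; and it contains every $B_{k}$, hence every finite block, so it is universal.

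The only delicate point --- and the reason the converse is not a one-line corollary of Proposition~\ref{Universal point} --- is keeping the orbit $z_{k}$ away from the endpoints $0$ and $\frac{m}{\beta-1}$ of $I_{\beta,m}$: each endpoint has a unique $\beta$-expansion, so the density hypothesis says nothing at those points, and once the orbit hit an endpoint the construction could not be continued. Appending the suffix ``$0$ then $m$'' to $B_{k+1}$ before invoking universality of $z_{k}$ is exactly what rules this out, because the tail value $\sum_{i=1}^{\infty}e_{p+i}\beta^{-i}$ equals $0$ only when $e_{p+1}=e_{p+2}=\cdots=0$ and equals $\frac{m}{\beta-1}$ only when $e_{p+1}=e_{p+2}=\cdots=m$, whereas by construction $e_{p+1}=0$ and $e_{p+2}=m$. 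Everything else --- the verification of the two invariants at each step and the passage to the limit --- is routine bookkeeping with prefixes.
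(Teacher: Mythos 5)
Your proof is correct and follows essentially the same strategy as the paper: the forward direction is read off from the earlier propositions, and the converse is the same inductive concatenation argument, re-applying the hypothesis to the successive images of $x$ and keeping those images in the open interval $(0,\frac{m}{\beta-1})$ so the induction can continue. The only difference is the device used for that last point --- you pad each block $B_{k+1}$ with the suffix $0\,m$ so the tail value avoids the endpoints, whereas the paper chooses the connecting sequence $a$ so that $a(x)$ lies in the interior of the interval $I_{B_{k+1}}$; both handle the endpoint issue equally well.
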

\begin{proof}
The rightwards implication is an immediate consequence of Proposition \ref{Universal expansion}. Let us suppose $S_{\beta,m}(x)$ is dense in $I_{\beta,m}$ for every $x\in (0,\frac{m}{\beta-1}).$ Let $\{B_{i}\}_{i=1}^{\infty}$ be an enumeration of all finite blocks of transformations from $\{T_{\beta,0},\ldots,T_{\beta,m}\}.$ For each $B_{i}$ there exists an interval $I_{B_{i}}$ such that, $B_{i}(y)\in I_{\beta,m}$ if and only if $y\in I_{B_{i}}.$ As $S_{\beta,m}(x)$ is dense in $I_{\beta,m}$ there exists a finite sequence of maps $a,$ such that $a(x)\in int(I_{B_{1}}),$ therefore $B_{1}\circ a(x) \in (0,\frac{m}{\beta-1}).$ Applying our hypothesis to $B_{1}\circ a(x)$ we can assert that there exists a finite sequence of maps $a_{1},$ such that $(B_{2}\circ a_{1} \circ B_{1} \circ a)(x)\in (0,\frac{m}{\beta-1}).$ Repeating this process arbitrarily many times we may construct an infinite sequence of maps containing all finite blocks from $\{T_{\beta,0},\ldots, T_{\beta,m}\}$, by Lemma \ref{Bijection lemma} our result follows.
\end{proof}

In \cite{ErdosKomornik} the following result was shown to hold.

\begin{thm}
\label{Big L theorem}
If $L^{m}(\beta)=0$ then every $x\in(0,\frac{m}{\beta-1})$ has a universal $\beta$-expansion.
\end{thm}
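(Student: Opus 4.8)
The plan is to deduce Theorem \ref{Big L theorem} from the preceding propositions by showing that the hypothesis $L^{m}(\beta)=0$ forces $S_{\beta,m}(x)$ to be dense in $I_{\beta,m}$ for every $x\in(0,\frac{m}{\beta-1})$; the conclusion then follows immediately from the third Proposition of this section. So the real content is an estimate: starting from an arbitrary $x\in(0,\frac{m}{\beta-1})$ and an arbitrary nontrivial subinterval $\mathcal{I}\subseteq I_{\beta,m}$, I must produce a finite sequence of maps $a$ with $a(x)\in\mathcal{I}$.

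First I would unwind the relationship between the sets $S_{\beta,m,n}(x)$ and the discrete sets $Y^{m}(\beta)$. Observe that $a^{(n)}(x)=\beta^{n}x-\sum_{i=1}^{n}\epsilon_{i}\beta^{n-i}$ for suitable digits $\epsilon_{i}$, so $S_{\beta,m,n}(x)=\bigl(\beta^{n}x-Y^{m}_{n}(\beta)\bigr)\cap I_{\beta,m}$, where $Y^{m}_{n}(\beta)$ denotes the finite set $\{\sum_{i=0}^{n-1}\epsilon_{i}\beta^{i}:\epsilon_{i}\in\{0,\ldots,m\}\}$; note $Y^{m}_{n}(\beta)$ exhausts a fixed initial segment of $Y^{m}(\beta)$ as $n\to\infty$. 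Next I would fix a target length $N$ and ask that $\mathcal{I}$ be reachable after $n=N+k$ steps for some $k$: the point is that the set $\beta^{N+k}x$ slides to the right (roughly geometrically) as $k$ grows, while the available ``landing spots'' are the elements of $Y^{m}(\beta)$ translated by $\beta^{N+k}x$ that happen to fall in $I_{\beta,m}$. Concretely, $a(x)$ can equal $\beta^{N+k}x-y$ for any $y\in Y^{m}(\beta)$ with $\beta^{N+k}x-y\in I_{\beta,m}$, i.e. for any $y\in Y^{m}(\beta)\cap[\beta^{N+k}x-\tfrac{m}{\beta-1},\beta^{N+k}x]$. Since this window has fixed length $\tfrac{m}{\beta-1}=\mathrm{diam}(I_{\beta,m})$ and $Y^{m}(\beta)$ is unbounded, for every large $k$ the window contains many consecutive points $y^{m}_{j}(\beta)$; the resulting points $\beta^{N+k}x-y^{m}_{j}(\beta)$ fill $I_{\beta,m}$ with gaps at most $\max$ of consecutive gaps $y^{m}_{j+1}(\beta)-y^{m}_{j}(\beta)$ in that window.

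Now I invoke $L^{m}(\beta)=\limsup_{k}(y^{m}_{k+1}(\beta)-y^{m}_{k}(\beta))=0$: this says that, given any $\delta>0$, all but finitely many consecutive gaps in $Y^{m}(\beta)$ are smaller than $\delta$. Hence there is $K_{0}$ so that beyond $y^{m}_{K_{0}}(\beta)$ every gap is below $\delta$. Choosing $k$ large enough that the left endpoint $\beta^{N+k}x-\tfrac{m}{\beta-1}$ of the window exceeds $y^{m}_{K_{0}}(\beta)$ — possible because $\beta^{N+k}x\to\infty$ — the entire window lies in the ``fine'' part of $Y^{m}(\beta)$, so $S_{\beta,m,N+k}(x)$ is a $\delta$-net of $I_{\beta,m}$. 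Taking $\delta$ smaller than the length of $\mathcal{I}$ yields a point of $S_{\beta,m}(x)$ in $\mathcal{I}$, proving density; since $x$ and $\mathcal{I}$ were arbitrary, the hypothesis of the third Proposition holds and every point has a universal $\beta$-expansion.

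The step I expect to require the most care is the bookkeeping at the edges of the window $[\beta^{N+k}x-\tfrac{m}{\beta-1},\beta^{N+k}x]$: one must be sure that the points $\beta^{N+k}x-y^{m}_{j}(\beta)$ landing in $I_{\beta,m}$ really do form a $\delta$-net of \emph{all} of $I_{\beta,m}$ and not merely of a subinterval, which forces a short argument that both endpoints $0$ and $\tfrac{m}{\beta-1}$ are approximated — this uses that the window has length exactly $\tfrac{m}{\beta-1}$ together with the existence of elements of $Y^{m}(\beta)$ just inside each end of the window (here one may also need $\beta\le m+1$, which guarantees $Y^{m}(\beta)$ is genuinely dense-gapped rather than having a uniform lower gap bound). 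Everything else is an elementary limit argument, and the translation from ``dense $S_{\beta,m}(x)$ for all $x$'' to ``universal expansions'' is already packaged in the third Proposition above.
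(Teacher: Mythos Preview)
The paper does not prove Theorem~\ref{Big L theorem}: it is quoted from Erd\H{o}s--Komornik \cite{ErdosKomornik} without argument, so there is no proof in the paper to compare your attempt against.

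That said, your route---deduce density of $S_{\beta,m}(x)$ for every $x$ from $L^{m}(\beta)=0$ and then invoke the third Proposition---is a natural way to recover the result inside the paper's framework, and the identification $S_{\beta,m,n}(x)=(\beta^{n}x-Y^{m}_{n}(\beta))\cap I_{\beta,m}$ is correct because $\Omega_{\beta,m,n}(x)$ constrains only the terminal point. The one place that needs more than you give it is the passage from gaps in $Y^{m}(\beta)$ to gaps in your $Y^{m}_{n}(\beta)$: after the obvious rescaling by $\beta$ coming from the different indexing conventions, the two sets coincide only on $[0,\beta^{n})$, so your claim that ``$Y^{m}_{n}(\beta)$ exhausts a fixed initial segment of $Y^{m}(\beta)$'' is only literally true there. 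Consequently the window argument as written directly covers only $x<1$; for $x\in[1,\tfrac{m}{\beta-1}-1]$ (nonempty when $\beta\le \tfrac{m}{2}+1$) the window $[\beta^{n}x-\tfrac{m}{\beta-1},\beta^{n}x]$ can sit entirely in a region where $Y^{m}_{n}(\beta)$ is not an initial segment of $Y^{m}(\beta)$, and some extra step is required---for instance, first apply finitely many greedy maps to bring $x$ into $[0,1)$ (the greedy orbit of any $x<\tfrac{m}{\beta-1}$ eventually does so), or use the reflection $y\mapsto\tfrac{m}{\beta-1}-y$, which interchanges $T_{\beta,i}$ with $T_{\beta,m-i}$. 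This is exactly the ``bookkeeping at the edges'' you flag, but it is a genuine case distinction rather than a triviality.
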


By Theorem \ref{Big L theorem} and the results presented in \cite{AkiKom}, \cite{Feng} and \cite{Sidorov2} the following theorem is immediate.

\begin{thm}
\label{First thm}
\begin{itemize}
	\item Let $\beta\in(1,2^{1/3}]$ and assume $\beta$ is not a Pisot number, then $S_{\beta,1}(x)$ is dense in $I_{\beta,1}$ for every $x\in(0,\frac{1}{\beta-1})$.
	\item Let $\beta\in(2^{1/3},2^{1/2})$ and assume that $\beta^{2}$ is not a Pisot number, then $S_{\beta,1}(x)$ is dense in $I_{\beta,1}$ for every $x\in(0,\frac{1}{\beta-1}).$
	\item Let $\beta\in(1,2),$ then for almost every $x\in I_{\beta,1}$ there exists $a\in\Omega_{\beta,1}(x)$ such that $\{a_{n}\circ \ldots \circ a_{1}(x)|  n=1,2,\ldots\}$ is dense in $I_{\beta,1}.$
\end{itemize}
\end{thm}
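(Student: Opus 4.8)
The plan is to obtain each of the three assertions by combining a known result about the spacing quantity $L^{1}(\beta)$, or about the abundance of universal expansions, with the elementary facts established earlier in this section. For the first two assertions the route is identical: one shows $L^{1}(\beta)=0$, applies Theorem \ref{Big L theorem} to deduce that every $x\in(0,\frac{1}{\beta-1})$ has a universal $\beta$-expansion, and then invokes Proposition \ref{Universal point}. Here one uses the trivial observation that a point possessing a universal $\beta$-expansion is in particular a universal point, so Proposition \ref{Universal point} does apply and yields that $S_{\beta,1}(x)$ is dense in $I_{\beta,1}$.

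For the first assertion the required input is the theorem of Akiyama and Komornik \cite{AkiKom}, which establishes that $L^{1}(\beta)=0$ for every $\beta\in(1,2^{1/3}]$ that is not a Pisot number; feeding this into the chain above settles this range. For the second assertion one appeals instead to \cite{Feng}, where the range is extended to $(2^{1/3},2^{1/2})$ at the expense of requiring $\beta^{2}$ (rather than $\beta$) to fail to be Pisot. The mechanism being exploited is that separating the exponents in $\sum_{i=1}^{n}\epsilon_{i}\beta^{i}$ according to parity rewrites $Y^{1}(\beta)$ in terms of the analogous set built from the base $\beta^{2}$ with digit alphabet $\{0,1,2\}$, so that it is the arithmetic of $\beta^{2}$ which controls $L^{1}(\beta)$ in this regime; the outcome is again $L^{1}(\beta)=0$, and the argument concludes exactly as before.

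The third assertion is of a different flavour, since $L^{1}(\beta)$ need not vanish for every $\beta\in(1,2)$ (it is positive whenever $\beta$ is Pisot, by Garsia's separation property), so Theorem \ref{Big L theorem} is unavailable for such $\beta$. Instead I would use Sidorov's theorem \cite{Sidorov2} that for every $\beta\in(1,2)$ Lebesgue-almost every $x\in I_{\beta,1}$ has a universal $\beta$-expansion, and then apply Proposition \ref{Universal expansion} to each such $x$ to extract the desired $a\in\Omega_{\beta,1}(x)$ whose orbit $\{(a_{n}\circ\cdots\circ a_{1})(x)\}_{n\geq1}$ is dense in $I_{\beta,1}$.

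In truth there is no substantial obstacle in the argument: every step is either a citation or one of the propositions above. The only point demanding a little care is to verify that the precise hypotheses of the results quoted from \cite{AkiKom}, \cite{Feng} and \cite{Sidorov2} match the stated intervals for $\beta$, and to note that since $0$ and $\frac{1}{\beta-1}$ are excluded from the relevant ranges of $x$, Theorem \ref{Big L theorem} and Propositions \ref{Universal point} and \ref{Universal expansion} all apply verbatim.
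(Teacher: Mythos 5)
Your argument is correct and is precisely the route the paper intends: the paper offers no proof beyond declaring the theorem immediate from Theorem \ref{Big L theorem} together with the results of \cite{AkiKom}, \cite{Feng} and \cite{Sidorov2}, and you have simply made that chain of implications explicit (vanishing of $L^{1}(\beta)$ in the first two cases, hence universal expansions, hence density via Proposition \ref{Universal point}; Sidorov's almost-everywhere universality plus Proposition \ref{Universal expansion} in the third). The only unverifiable detail is your attribution of the two $L^{1}(\beta)=0$ statements separately to \cite{AkiKom} and \cite{Feng}, since the paper cites both without apportioning them, but the logical structure matches exactly.
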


\section{The Pisot case}

In this section we study the case when $\beta$ is Pisot. As well as proving Theorem \ref{First theorem} we will show that the following result holds.

\begin{prop}
\label{Pisot bounded}
Let $\beta$ be a Pisot number and $x\in I_{\beta,m}$, then $Card(S_{\beta,m,n}(x))$ can be bounded above by some constant depending only on $\beta$ and $m$.
\end{prop}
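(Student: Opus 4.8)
The plan is to recast the counting of $S_{\beta,m,n}(x)$ as a question about the separation of a discrete subset of $\mathbb{Z}[\beta]$, and then to exploit the defining property of Pisot numbers.

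First I would record the explicit form of an iterate. If $a=(T_{\beta,\epsilon_{i}})_{i=1}^{n}\in\Omega_{\beta,m,n}(x)$, then unwinding the definition of $T_{\beta,i}$ gives
$$a^{(n)}(x)=\beta^{n}x-\sum_{i=1}^{n}\epsilon_{i}\beta^{n-i}.$$
Hence every element of $S_{\beta,m,n}(x)$ has the shape $\beta^{n}x-P$, where $P=\sum_{j=0}^{n-1}d_{j}\beta^{j}$ with $d_{j}\in\{0,\ldots,m\}$ is a finite sum of powers of $\beta$ with digits in $\{0,\ldots,m\}$. Moreover, by the very definition of $\Omega_{\beta,m,n}(x)$, each such element lies in $I_{\beta,m}$, an interval of length $\frac{m}{\beta-1}$ that depends only on $\beta$ and $m$.

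The crucial step is a uniform separation estimate for the points of $S_{\beta,m,n}(x)$. I would take two distinct elements $s_{1}=\beta^{n}x-P_{1}$ and $s_{2}=\beta^{n}x-P_{2}$; the term $\beta^{n}x$ cancels, so $\alpha:=s_{1}-s_{2}=P_{2}-P_{1}=\sum_{i}c_{i}\beta^{i}$ with $c_{i}\in\{-m,\ldots,m\}$ not all zero, a finite sum and in particular a nonzero element of $\mathbb{Z}[\beta]$, hence a nonzero algebraic integer. Writing $\beta=\beta_{1},\beta_{2},\ldots,\beta_{d}$ for the Galois conjugates of $\beta$ and applying each embedding of $\mathbb{Q}(\beta)$ to $\alpha$, the field norm $N(\alpha)=\prod_{j=1}^{d}\big(\sum_{i}c_{i}\beta_{j}^{i}\big)$ is a nonzero rational integer, so $|N(\alpha)|\geq 1$. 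This is exactly where the Pisot hypothesis is used: for $j\geq 2$ we have $|\beta_{j}|<1$, whence each conjugate factor satisfies $\big|\sum_{i}c_{i}\beta_{j}^{i}\big|\leq\sum_{i}m|\beta_{j}|^{i}\leq\frac{m}{1-|\beta_{j}|}$, and therefore $\prod_{j=2}^{d}\big|\sum_{i}c_{i}\beta_{j}^{i}\big|\leq C$ with $C:=\prod_{j=2}^{d}\frac{m}{1-|\beta_{j}|}$ a constant depending only on $\beta$ and $m$. Since $s_{1}-s_{2}$ is the real value $\alpha=\sum_{i}c_{i}\beta^{i}$, this yields $|s_{1}-s_{2}|=|\alpha|\geq|N(\alpha)|/C\geq 1/C=:\delta>0$.

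Combining the two observations completes the argument: $S_{\beta,m,n}(x)$ is a subset of an interval of length $\frac{m}{\beta-1}$ whose points are pairwise at distance at least $\delta$, so arranging them in increasing order forces
$$Card(S_{\beta,m,n}(x))\leq\frac{m}{\delta(\beta-1)}+1=\frac{mC}{\beta-1}+1,$$
a bound depending only on $\beta$ and $m$ and independent of both $n$ and $x\in I_{\beta,m}$, as required. I expect the separation estimate to be the only genuine obstacle: it is precisely the classical Garsia-type statement that the finite $\beta$-polynomials with bounded digits form a uniformly discrete set, and it is the sole place where the full strength of the Pisot condition (all conjugates strictly inside the unit disc) enters. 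I would also note in passing that this uniform separation amounts to the statement $l^{m}(\beta)>0$ for Pisot $\beta$, so one could alternatively invoke the cited literature on $l^{m}(\beta)$ in place of the norm computation.
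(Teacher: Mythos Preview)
Your proof is correct and follows essentially the same route as the paper: write each point of $S_{\beta,m,n}(x)$ as $\beta^{n}x-P$, observe that differences of two such points are nonzero polynomials in $\beta$ with coefficients in $\{-m,\ldots,m\}$, invoke uniform discreteness of such polynomials, and finish by a pigeonhole in the interval $I_{\beta,m}$. The only difference is cosmetic: the paper quotes the separation estimate as Garsia's theorem (Theorem~\ref{Garcia thm}) to obtain $|z-z'|\geq\Delta(\beta,m)$, whereas you supply the standard norm argument for it explicitly.
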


To prove Theorem \ref{First theorem} and Proposition \ref{Pisot bounded} we require the following theorem due to Garsia \cite{Garsia}.
\begin{thm}
\label{Garcia thm}
Let $m\in\mathbb{N}$ and $\beta\in(1,m+1]$ be a Pisot number, then $Y^{m}(\beta)$ is uniformly discrete, i.e., there exists $\Delta(\beta,m)>0$ such that $|x-y|\geq \Delta(\beta,m)$ for all $x,y\in Y^{m}(\beta)$ such that $x\neq y.$
\end{thm}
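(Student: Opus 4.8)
The plan is to prove the separation directly by bounding $|x-y|$ from below whenever $x,y\in Y^{m}(\beta)$ with $x\neq y$, exploiting the Galois conjugates of the Pisot number $\beta$ together with the elementary fact that a nonzero algebraic integer has field norm of absolute value at least $1$. First I would fix notation: let $\beta$ have degree $d$ over $\mathbb{Q}$ with (monic, integer) minimal polynomial $p$, and write $\beta=\beta_{1},\beta_{2},\ldots,\beta_{d}$ for its conjugates, so that $|\beta_{j}|<1$ for $2\leq j\leq d$ by the Pisot hypothesis. Given $x=\sum_{i=1}^{n}\epsilon_{i}\beta^{i}$ and $y=\sum_{i=1}^{n}\delta_{i}\beta^{i}$ (padding the shorter sum with zeros), set $R(t)=\sum_{i=1}^{n}(\epsilon_{i}-\delta_{i})t^{i}\in\mathbb{Z}[t]$; its coefficients lie in $\{-m,\ldots,m\}$, and $x-y=R(\beta)\neq 0$ as a real number.

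Next I would observe that $R(\beta)$ is a nonzero algebraic integer lying in $\mathbb{Q}(\beta)$, so its norm $N:=\prod_{j=1}^{d}R(\beta_{j})$ is a rational integer, and it is nonzero: if some factor $R(\beta_{j})$ vanished then $p\mid R$, whence $R(\beta)=0$, a contradiction. Therefore $|N|\geq 1$. For each $j\geq 2$ one estimates $|R(\beta_{j})|\leq m\sum_{i\geq 1}|\beta_{j}|^{i}=\frac{m|\beta_{j}|}{1-|\beta_{j}|}\leq\frac{m}{1-|\beta_{j}|}$, while $R(\beta_{1})=R(\beta)=x-y$. Combining these gives
$$1\leq |N|=|x-y|\prod_{j=2}^{d}|R(\beta_{j})|\leq |x-y|\prod_{j=2}^{d}\frac{m}{1-|\beta_{j}|},$$
so that $|x-y|\geq\Delta(\beta,m):=\prod_{j=2}^{d}\frac{1-|\beta_{j}|}{m}>0$, with the empty product interpreted as $1$ in the degenerate case $d=1$ (i.e.\ when $\beta$ is a rational integer, where $Y^{m}(\beta)\subseteq\mathbb{Z}$ is trivially $1$-separated). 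Since $x,y$ were arbitrary distinct elements of $Y^{m}(\beta)$, this proves the theorem.

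The only genuinely delicate point is the number-theoretic input in the second step: justifying that $\prod_{j=1}^{d}R(\beta_{j})$ is a rational integer — it is a symmetric polynomial in the $\beta_{j}$ with integer coefficients, hence expressible with integer coefficients in the coefficients of the monic minimal polynomial $p$ — and that it is nonzero. This is standard, and everything else reduces to the elementary geometric-series estimate above.
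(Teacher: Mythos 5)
Your proof is correct and complete. Note that the paper itself does not prove this statement at all: it is quoted as a known result of Garsia, with a citation to his 1962 paper on Bernoulli convolutions, so there is no in-paper argument to compare against. What you have written is the standard (and essentially Garsia's original) separation argument: the difference $x-y=R(\beta)$ is a nonzero algebraic integer in $\mathbb{Q}(\beta)$, its norm $\prod_{j=1}^{d}R(\beta_{j})$ is a nonzero rational integer and hence has absolute value at least $1$, and each conjugate factor with $j\geq 2$ is bounded by $m/(1-|\beta_{j}|)$ uniformly in the length $n$ because $|\beta_{j}|<1$ allows the geometric series to be summed to infinity. All the delicate points are handled: the nonvanishing of the norm via the minimal polynomial dividing $R$, the uniformity of $\Delta(\beta,m)$ over all pairs (the bound depends only on $\beta$ and $m$, not on $n$), and the degenerate case $d=1$. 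The resulting constant $\Delta(\beta,m)=\prod_{j=2}^{d}\frac{1-|\beta_{j}|}{m}$ is exactly the kind of explicit separation constant the rest of the paper needs.
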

\begin{proof}[Proof of Theorem \ref{First theorem}]
Suppose $S_{\beta,m}(x)$ is finite, in this case there exists $n<n',$ $a\in \Omega_{\beta,m,n}(x)$ and $a'\in \Omega_{\beta,m,n'}(x)$ such that $a(x)=a'(x)$. It follows that there exists $(\epsilon_{i})_{i=0}^{n-1}\in \{0,\ldots,m\}^{n}$ and $(\epsilon_{i}')_{i=0}^{n'-1}\in \{0,\ldots,m\}^{n'}$ satisfying $$\beta^{n}x-\sum_{i=0}^{n-1}\epsilon_{i}\beta^{i}=\beta^{n'}x-\sum_{i=0}^{n'-1}\epsilon_{i}'\beta^{i},$$ from which it is a simple consequence that $x\in\mathbb{Q}(\beta).$ It remains to show that the opposite implication holds. Let $x\in \mathbb{Q}(\beta)$, then $x=p(\beta)/n_{1}$ for some $n_{1}\in\mathbb{N}$ and $p(\beta)=\sum_{i=0}^{d}\delta_{i}\beta^{i}$ where $\delta_{i}\in\mathbb{Z}.$ Let $\epsilon>0$, we assume for a contradiction that $S_{\beta,m}(x)$ is infinite. If $S_{\beta,m}(x)$ is infinite then there exists $j,j'\in\mathbb{N}$ such that 
\begin{equation}
\label{equation 1}
0<\Big|\Big(\frac{\beta^{j}p(\beta)}{n_{1}}-\sum_{i=0}^{j-1}\epsilon_{i}\beta^{i}\Big)-\Big(\frac{\beta^{j'}p(\beta)}{n_{1}}-\sum_{i=0}^{j'-1}\epsilon'_{i}\beta^{i}\Big)\Big|<\frac{\epsilon}{n_{1}},
\end{equation}for some $(\epsilon_{i})_{i=0}^{j-1}\in\{0,\ldots,m\}^{j}$ and $(\epsilon'_{i})_{i=0}^{j'-1}\in\{0,\ldots,m\}^{j'}.$ By an abuse of notation we can rewrite (\ref{equation 1}) as
\begin{equation}
\label{equation 2}
\Big|\frac{\beta^{j}p(\beta)-\beta^{j'}p(\beta)}{n_{1}}-\sum_{i=0}^{k}\epsilon_{i}\beta^{i}\Big|<\frac{\epsilon}{n_{1}},
\end{equation} where $k=\max\{j-1,j'-1\}$ and $(\epsilon_{i})_{i=0}^{k}\in\{-m,\ldots,m\}^{k+1}.$ Multiplying through by $n_{1}$ we can rewrite (\ref{equation 2}) as
\begin{equation}
\label{equation 3}
\Big|\beta^{j}p(\beta)-\beta^{j'}p(\beta)-n_{1}\sum_{i=0}^{k}\epsilon_{i}\beta^{i}\Big|<{\epsilon}.
\end{equation}Let $n_{2}=\max\{|\delta_{i}|\}$ and $L=\max\{d+j,d+j'\},$ collecting positive and negative terms we can rewrite (\ref{equation 3}) as 
\begin{equation}
\label{equation 4}
\Big|\sum_{i=0}^{L}\omega_{i}\beta^{i}-\sum_{i=0}^{L}\omega_{i}'\beta^{i}\Big|<{\epsilon},
\end{equation} for some $(\omega_{i})_{i=0}^{L},(\omega_{i'})_{i=0}^{L}\in \{0,\ldots, n_{1}m+n_{2}\}^{L+1}.$ We remark that $n_{1}m+n_{2}$ has no dependence on $j$ and $j'$ and if we take $\epsilon=\Delta(\beta,n_{1}m+n_{2}),$ then by Theorem \ref{Garcia thm} we have a contradiction.
\end{proof}
\begin{proof}[Proof of Proposition \ref{Pisot bounded}]
Let $m\in\mathbb{N}$ and $\beta\in(1,m+1]$ be a Pisot number, for each $x\in I_{\beta,m}$ we can rewrite $S_{\beta,m,n}(x)$ as 
$$S_{\beta,m,n}(x)=\Big\{y\in I_{\beta,m}|y=\beta^{n}x-\sum_{i=0}^{n-1}\epsilon_{i}\beta^{i} \textrm{ where } \epsilon_{i}\in\{0,\ldots,m\}\Big\}.$$ Let $z,z'\in S_{\beta,m,n}(x)$ and $z\neq z',$ then $|z-z'|= |\sum_{i=0}^{n-1}\epsilon_{i}\beta^{i}-\sum_{i=0}^{n-1}\epsilon_{i}'\beta^{i}|$ for some $(\epsilon_{i})_{i=0}^{n-1}, (\epsilon_{i}')_{i=0}^{n-1}\in\{0,\ldots,m\}^{n}.$ By Theorem \ref{Garcia thm} $|z-z'|\geq \Delta(\beta,m)$ and therefore $$Card(S_{\beta,m,n}(x))\leq \Big[\frac{\frac{m}{\beta-1}}{\Delta(\beta,m)}\Big]+1.$$
\end{proof}
\begin{remark}
In \cite{Sidorov2} it was shown that for $\beta\in(1,2)$ almost every $x\in I_{\beta,1}$ has a universal $\beta$-expansion. By Proposition $\ref{Universal expansion}$ it follows that $S_{\beta,1}(x)$ is dense in $I_{\beta,m}$ for almost every $x\in I_{\beta,1}.$ We might expect $Card(S_{\beta,1,n}(x))\to \infty$ as $n\to\infty$ for almost every $x$. However, by Proposition \ref{Pisot bounded} $Card(S_{\beta,1,n}(x))$ can be bounded above when $\beta$ is Pisot for all $x\in I_{\beta,m}$.
\end{remark}
 The following corollary is an immediate consequence of Theorem \ref{First theorem} and Propositions \ref{Universal point} and \ref{Universal expansion}.
\begin{cor}
Let $\beta$ be Pisot, if $x\in\mathbb{Q}(\beta)$ then $x$ cannot be universal or have a universal $\beta$-expansion.
\end{cor}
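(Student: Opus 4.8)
The plan is to combine Theorem \ref{First theorem} with the two characterisations of universality established earlier. First I would observe that Theorem \ref{First theorem} tells us that for $\beta$ Pisot and $x\in\mathbb{Q}(\beta)$, the set $S_{\beta,m}(x)$ is finite. A finite subset of $I_{\beta,m}$ cannot be dense in $I_{\beta,m}$, since $I_{\beta,m}=[0,\frac{m}{\beta-1}]$ is a nondegenerate interval (as $\beta\in(1,m+1]$ forces $\frac{m}{\beta-1}>0$). Hence $S_{\beta,m}(x)$ is not dense in $I_{\beta,m}$.

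Next I would invoke Proposition \ref{Universal point}: a point $x\in I_{\beta,m}$ is universal if and only if $S_{\beta,m}(x)$ is dense in $I_{\beta,m}$. Since $S_{\beta,m}(x)$ is not dense, $x$ is not universal. For the statement about universal expansions, I would argue contrapositively: if $x$ had a universal $\beta$-expansion, then by Proposition \ref{Universal expansion} there would exist $a\in\Omega_{\beta,m}(x)$ with $\{(a_{n}\circ\ldots\circ a_{1})(x) : n=1,2,\ldots\}$ dense in $I_{\beta,m}$; but this set is contained in $S_{\beta,m}(x)$, which is finite, so it cannot be dense in the interval $I_{\beta,m}$, a contradiction. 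Therefore $x$ has no universal $\beta$-expansion.

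I expect no serious obstacle here: every step is an immediate application of a result stated earlier in the excerpt, and the only genuine content is the elementary topological fact that a finite set is not dense in a nondegenerate interval. The one small point worth stating carefully is that $x\in\mathbb{Q}(\beta)\cap I_{\beta,m}$ genuinely falls under the hypothesis of Theorem \ref{First theorem} and that $I_{\beta,m}$ is nondegenerate, but both are trivial. Since having a universal $\beta$-expansion implies being a universal point (a universal expansion is in particular a $\beta$-expansion containing every finite block), the second conclusion formally also follows from the first together with the observation above, but going through Proposition \ref{Universal expansion} makes the argument self-contained and matches the way the corollary is phrased.
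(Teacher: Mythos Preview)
Your proposal is correct and matches the paper's own treatment: the corollary is stated there as an immediate consequence of Theorem \ref{First theorem} together with Propositions \ref{Universal point} and \ref{Universal expansion}, which is precisely the route you take. Your extra remarks (nondegeneracy of $I_{\beta,m}$, the containment $\{(a_{n}\circ\cdots\circ a_{1})(x)\}\subset S_{\beta,m}(x)$) just make explicit the trivial topological step the paper leaves implicit.
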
 This generalises a result in \cite{ErdosKomornik} where it was shown that if $\beta$ is Pisot then $1$ cannot have a universal $\beta$-expansion.

\section{Generalisation of Schmidt's theorem}
In this section we generalise a theorem due to Schmidt \cite{Schmidt}. Before stating our theorem and Schmidt's it is necessary to establish the following. Let $\mathcal{A}=\{B_{k}\}_{k=1}^{2^{m+1}-1}=\mathcal{P}(\{0,\ldots,m\})\setminus\ \{\emptyset\},$ where $\mathcal{P}(\{0,\ldots,m\})$ denotes the powerset of $\{0,\ldots,m\}.$ We set $$B(x):=\Big\{i\in\{0,\ldots,m\} : T_{\beta,i}(x)\in I_{\beta,m}\Big\},$$ and to each $B_{k}\in \mathcal{A}$ we associate the set $$I_{k,\beta,m}=\Big\{x\in I_{\beta,m}: B(x)=B_{k}\Big\}.$$

We remark that for many $B_{k}$ the corresponding set $I_{k,\beta,m}$ will be empty, however for our purposes this will not be important. We also remark that the set $\{I_{k,\beta,m}\}_{k=1}^{2^{m+1}-1}$ forms a partition of $I_{\beta,m}.$ When $I_{k,\beta,m}$ is non-empty it will be an interval; possibly open, closed or neither. 
\begin{example}
Let $m=1$ and $\beta\in(1,2],$ in this case $\mathcal{A}=\{B_{k}\}_{k=1}^{3}=\{ \{0\},\{1\},\{0,1\}\}$ and $I_{1,\beta,m}=[0,\frac{1}{\beta}),$ $I_{2,\beta,m}=(\frac{1}{\beta(\beta-1)},\frac{1}{\beta-1}]$ and $I_{3,\beta,m}=[\frac{1}{\beta},\frac{1}{\beta(\beta-1)}].$
\end{example}

Suppose $B_{k}=\{\delta_{k,l}\}_{l=1}^{p(k)},$ we let $\{A_{k,l}\}_{l=1}^{p(k)}$ be a partition of $I_{k,\beta,m}.$ As  $\{I_{k,\beta,m}\}_{k=1}^{2^{m+1}-1}$ is a partition of $I_{\beta,m}$ then $\{A_{k,l}\}_{k,l}$ is also a partition of $I_{\beta,m}.$ We define the \textit{expansion generating function associated to $\{A_{k,l}\}_{k,l}$} to be the function $F:I_{\beta,m}\to I_{\beta,m},$ where $F(x)=T_{\beta,\delta_{k,l}}(x)$ if  $x\in A_{k,l}.$ As $\{A_{k,l}\}_{k,l}$ is a partition of $I_{\beta,m}$ the function $F$ is well defined. We refer to $F$ as the expansion generating function associated to $\{A_{k,l}\}_{k,l}$ because by repeatedly iterating our map $F$ it associates to each $x\in I_{\beta,m}$ a unique element of $\Omega_{\beta,m}(x),$ by Lemma \ref{Bijection lemma} this corresponds to a unique $\beta$-expansion for $x$. We define this unique $\beta$-expansion to be the \textit{$\beta$-expansion generated by $F$}. Intuitively we think of the partition $\{A_{k,l}\}_{k,l}$ as a collection of rules under which whenever we have a choice of maps from $\{T_{\beta,0},\ldots, T_{\beta,m}\}$ satisfying $T_{\beta,i}(x)\in I_{\beta,m},$ our rules decide which of these maps we perform. We remark that the elements of $\{A_{k,l}\}_{k,l}$ may have an exotic structure, they need not be intervals or even measurable sets. We refer the reader to Figure \ref{fig1} for a diagram illustrating a typical expansion generating function in the case where $m=1$ and $\beta\in(1,2).$
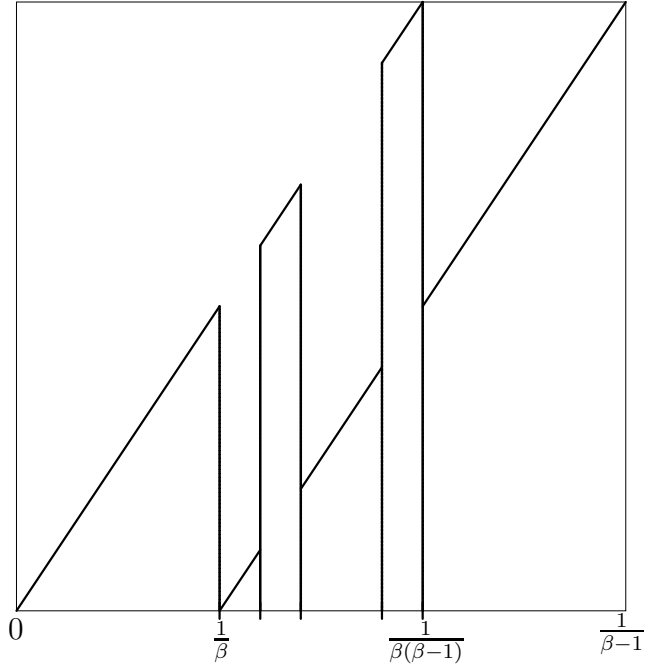
\begin{figure}[t]
\centering \unitlength=0.54mm
\begin{picture}(150,160)(0,-10)
\thinlines
\path(115,0)(0,0)(0,150)(150,150)(150,0)(115,0)
\put(-2,-7){$0$}
\put(143,-7){$\frac{1}{\beta-1}$}
\put(48,-9){$\frac{1}{\beta}$}
\put(91,-9){$\frac{1}{\beta(\beta-1)}$}

\thicklines
\path(0,0)(50,75)
\path(50,0)(60,15)
\path(60,90)(70,105)
\path(70,30)(90,60)
\path(90,135)(100,150)
\path(50,2)(50,-2)
\path(100,2)(100,-2)
\path(60,2)(60,-2)
\path(70,2)(70,-2)
\path(90,2)(90,-2)
\path(100,75)(150,150)
\dottedline(60,90)(60,0)
\dottedline(70,105)(70,0)
\dottedline(90,135)(90,0)
\dottedline(100,150)(100,0)
\dottedline(50,75)(50,0)
\end{picture}
\caption{A typical expansion generating function for $m=1$ and $\beta\in(1,2)$}
    \label{fig1}
\end{figure}

\begin{remark}
\label{Greedy lazy}
If $A_{k,\max\{\delta_{k,l}\}}=I_{k,\beta,m}$ for each $1\leq k\leq 2^{m+1}-1,$ then 

\[ F(x) = \left\{ \begin{array}{ll}
         \beta x(\textrm{mod }1)& \mbox{if $x\in[0,\frac{m}{\beta}] $}\\
        \beta x-m & \mbox{if $x\in(\frac{m}{\beta},\frac{m}{\beta-1}]$.}\end{array} \right. \] The $\beta$-expansion generated by this function is the greedy expansion. If $A_{k,\min\{\delta_{k,l}\}}=I_{k,\beta,m}$ then the $\beta$-expansion generated by $F$ is the lazy expansion. We refer the reader to \cite{Sidorov3} for the relevant details regarding greedy and lazy expansions.
\end{remark}
When $A_{k,\max\{\delta_{k,l}\}}=I_{k,\beta,m}$ as in Remark \ref{Greedy lazy} we denote the \textit{expansion generating function associated to $\{A_{k,l}\}_{k,l}$} by $F_{greedy},$ moreover we let $Pre(F_{greedy})$ denote the set of pre-periodic points of $F_{greedy}.$ In \cite{Schmidt} the following theorem was shown to hold.

\begin{thm}
\label{Schmidt theorem}
Let $\beta$ be a Pisot number, consider $F_{greedy}:[0,1)\to[0,1),$ then $Pre(F_{greedy})=\mathbb{Q}(\beta)\cap [0,1).$
\end{thm}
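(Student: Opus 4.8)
The plan is to prove Theorem \ref{Schmidt theorem} by first establishing the easy inclusion $Pre(F_{greedy})\subseteq\mathbb{Q}(\beta)\cap[0,1)$ and then the harder inclusion $\mathbb{Q}(\beta)\cap[0,1)\subseteq Pre(F_{greedy})$, the latter being essentially a corollary of Theorem \ref{First theorem}. For the first inclusion, suppose $x$ is pre-periodic for $F_{greedy}$, so there exist $n<n'$ with $F_{greedy}^{n}(x)=F_{greedy}^{n'}(x)$. Since each iterate of $F_{greedy}$ has the form $F_{greedy}^{k}(x)=\beta^{k}x-\sum_{i=0}^{k-1}\epsilon_{i}\beta^{i}$ for appropriate digits $\epsilon_{i}\in\{0,\ldots,m\}$, the equality $F_{greedy}^{n}(x)=F_{greedy}^{n'}(x)$ becomes a nontrivial linear relation over $\mathbb{Z}[\beta]$ with the coefficient of $x$ equal to $\beta^{n}-\beta^{n'}\neq 0$, whence $x\in\mathbb{Q}(\beta)$. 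This is exactly the manipulation carried out at the start of the proof of Theorem \ref{First theorem}.

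For the reverse inclusion, let $x\in\mathbb{Q}(\beta)\cap[0,1)$. Observe that for every $k$ the point $F_{greedy}^{k}(x)$ lies in $S_{\beta,m}(x)$: indeed $F_{greedy}$ is the expansion generating function $F$ attached to some partition $\{A_{k,l}\}_{k,l}$, and each application of $F$ is an application of one of the maps $T_{\beta,i}$ with $T_{\beta,i}$-image landing in $I_{\beta,m}$, so the finite sequence of maps applied to $x$ to produce $F_{greedy}^{k}(x)$ is an element of $\Omega_{\beta,m,k}(x)$, giving $F_{greedy}^{k}(x)\in S_{\beta,m,k}(x)\subseteq S_{\beta,m}(x)$. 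By Theorem \ref{First theorem}, since $x\in\mathbb{Q}(\beta)$ the set $S_{\beta,m}(x)$ is finite. Hence the forward orbit $\{F_{greedy}^{k}(x):k\geq 0\}$ is a finite subset of $[0,1)$, so there exist $n<n'$ with $F_{greedy}^{n}(x)=F_{greedy}^{n'}(x)$; since $F_{greedy}$ is a genuine (single-valued) function, this forces $F_{greedy}^{n+j}(x)=F_{greedy}^{n'+j}(x)$ for all $j\geq 0$, i.e. the orbit is eventually periodic, so $x\in Pre(F_{greedy})$. One small point to check is that $F_{greedy}$ restricted to $[0,1)$ really maps $[0,1)$ into itself in the form stated in Remark \ref{Greedy lazy}, so that iteration makes sense; this follows from the description $F_{greedy}(x)=\beta x\pmod 1$ on $[0,m/\beta]$ intersected with $[0,1)$ (here $m=1$ in Schmidt's setting, or more generally one works with $I_{\beta,m}$).

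The main obstacle, such as it is, is purely bookkeeping: one must be careful that the "orbit of $x$ under $F_{greedy}$" is literally a subset of $S_{\beta,m}(x)$, which requires noting that $F_{greedy}$ only ever applies maps $T_{\beta,i}$ whose image stays in $I_{\beta,m}$ — this is built into the definition of $B(x)$ and the partition $\{A_{k,l}\}_{k,l}$, so no real work is needed. The genuinely substantive input, Garsia's separation theorem (Theorem \ref{Garcia thm}) and its consequence Theorem \ref{First theorem}, has already been established, so the proof is short. I would also remark that the argument is not special to the greedy function: the identical reasoning shows that for \emph{any} expansion generating function $F$ associated to a partition $\{A_{k,l}\}_{k,l}$ one has $Pre(F)=\mathbb{Q}(\beta)\cap I_{\beta,m}$ when $\beta$ is Pisot, which is precisely the promised generalisation of Schmidt's theorem — and this generalisation should be stated as the theorem, with Theorem \ref{Schmidt theorem} recovered as the case $m=1$, $F=F_{greedy}$.
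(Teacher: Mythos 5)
Your proof is correct and is essentially the paper's own argument: the paper merely cites Schmidt for Theorem \ref{Schmidt theorem}, but proves its generalisation, Theorem \ref{Schmidt generalisation}, exactly as you do --- the iterates of an expansion generating function all lie in $S_{\beta,m}(x)$, which Theorem \ref{First theorem} makes finite for $x\in\mathbb{Q}(\beta)$, so the orbit is eventually periodic, while conversely pre-periodicity yields a nontrivial relation $\beta^{n}x-\sum\epsilon_{i}\beta^{i}=\beta^{n'}x-\sum\epsilon'_{i}\beta^{i}$ forcing $x\in\mathbb{Q}(\beta)$. Your closing remark that the result should be stated for arbitrary expansion generating functions, with the greedy map as a special case, is precisely what the paper does in Theorem \ref{Schmidt generalisation} and its corollary.
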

 It is not difficult to show that this result can be extended to the case when $F_{greedy}:I_{\beta,m}\to I_{\beta,m}.$ We generalise this result as follows.

\begin{thm}
\label{Schmidt generalisation}
Let $\beta$ be a Pisot number, then $Pre(F)=\mathbb{Q}(\beta)\cap I_{\beta,m}$ for any expansion generating function $F$.
\end{thm}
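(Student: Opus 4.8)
The plan is to prove the two inclusions $\mathbb{Q}(\beta)\cap I_{\beta,m}\subseteq Pre(F)$ and $Pre(F)\subseteq\mathbb{Q}(\beta)\cap I_{\beta,m}$ separately, exploiting the finiteness result of Theorem \ref{First theorem} together with the bijection of Lemma \ref{Bijection lemma}. The easy inclusion is the second one: if $x$ is pre-periodic for $F$, then the orbit $\{F^n(x):n\geq 0\}$ is a finite set, and each $F^n(x)$ has the form $\beta^n x-\sum_{i=0}^{n-1}c_i\beta^i$ with $c_i\in\{0,\dots,m\}$; picking two distinct indices $n<n'$ with $F^n(x)=F^{n'}(x)$ (which exist once the orbit is eventually periodic, unless the orbit is already a fixed point, a case handled directly) and rearranging yields a nontrivial polynomial identity in $\beta$ that solves for $x$, forcing $x\in\mathbb{Q}(\beta)$.

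For the harder inclusion $\mathbb{Q}(\beta)\cap I_{\beta,m}\subseteq Pre(F)$, I would argue as follows. Fix $x\in\mathbb{Q}(\beta)\cap I_{\beta,m}$ and consider its $F$-orbit $x_0=x$, $x_{n+1}=F(x_n)$. Each iterate is obtained by applying one of the maps $T_{\beta,i}$, so $x_n=a^{(n)}(x)$ for some $a^{(n)}\in\Omega_{\beta,m,n}(x)$, which means $\{x_n:n\geq 0\}\subseteq S_{\beta,m}(x)$. By Theorem \ref{First theorem}, since $x\in\mathbb{Q}(\beta)$, the set $S_{\beta,m}(x)$ is finite, hence the orbit $\{x_n:n\geq 0\}$ is a finite subset of $I_{\beta,m}$. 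A sequence in a finite set obtained by iterating a (single-valued) function $F$ must be eventually periodic: there exist $n<n'$ with $x_n=x_{n'}$, and because $F$ is a genuine function the values $x_{n},x_{n+1},\dots$ then cycle with period dividing $n'-n$. Therefore $x$ is a pre-periodic point of $F$, i.e. $x\in Pre(F)$.

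The main point to get right — and the place where one must be slightly careful — is the claim that $F$-iterates of $x$ land in $S_{\beta,m}(x)$: this uses that whenever $F(y)=T_{\beta,\delta_{k,l}}(y)$ we have, by the defining property of the partition $\{I_{k,\beta,m}\}$ and the sets $B(y)$, that $\delta_{k,l}\in B(y)$, so $T_{\beta,\delta_{k,l}}(y)\in I_{\beta,m}$; inductively this shows $(a_n\circ\cdots\circ a_1)(x)\in I_{\beta,m}$ for all $n$, i.e. the relevant sequence of maps lies in $\Omega_{\beta,m}(x)$ and each partial composition lies in $\Omega_{\beta,m,n}(x)$. Once this is in place, finiteness of $S_{\beta,m}(x)$ immediately forces the orbit to be finite, and eventual periodicity of an orbit under a single-valued map on a finite set is automatic. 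I would also note that the argument is completely uniform in the choice of expansion generating function $F$ — the only property of $F$ used is that its graph is a selection of the branches $T_{\beta,i}$ over the appropriate domains — which is exactly why the statement holds for \emph{any} $F$, not just $F_{greedy}$. Finally, I would remark that combining both inclusions gives the equality, and that this recovers Schmidt's Theorem \ref{Schmidt theorem} (extended to $I_{\beta,m}$) as the special case $F=F_{greedy}$.
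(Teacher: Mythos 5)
Your proof is correct and follows essentially the same route as the paper: the key inclusion $\mathbb{Q}(\beta)\cap I_{\beta,m}\subseteq Pre(F)$ is obtained exactly as in the paper by noting that the $F$-orbit of $x$ lies in $S_{\beta,m}(x)$, which is finite by Theorem \ref{First theorem}, so the orbit of a single-valued map must be eventually periodic. The only (harmless) variation is in the easy inclusion, where you extract $x\in\mathbb{Q}(\beta)$ directly from the algebraic identity $F^{n}(x)=F^{n'}(x)$ rather than from the eventually periodic $\beta$-expansion and a geometric series as the paper does; your added verification that $F$-iterates genuinely stay in $I_{\beta,m}$ is a detail the paper leaves implicit.
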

\begin{proof}
Let $x\in Pre(F),$ by Lemma \ref{Bijection lemma} there exists an eventually periodic $\beta$-expansion for $x,$ manipulating this expansion using standard techniques for geometric series we can conclude that $Pre(F)\subset\mathbb{Q}(\beta)\cap I_{\beta,m}$. We now show the opposite inclusion, let $x\in \mathbb{Q}(\beta)\cap I_{\beta,m}$ and $F$ be an expansion generating function corresponding to some $\{A_{k,l}\}_{k,l}$. Each succesive iterate of the map $F$ is an element of $S_{\beta,m}(x).$ By Theorem \ref{First theorem} we have that $S_{\beta,m}(x)$ is finite and therefore there exists $N,N'\in\mathbb{N},$ such that $F^{N}(x)=F^{N'}(x),$ therefore $x\in Pre(F).$ 
\end{proof}
In \cite{KalSte} a version of Theorem \ref{Schmidt generalisation} was shown to hold for a more general class of $\beta$-expansion, however, this was under the weaker assumption that the elements of $\{A_{k,l}\}_{k,l}$ were intervals.

The following corollary is an immediate consequence of Theorem \ref{Schmidt generalisation} and Lemma \ref{Bijection lemma}.

\begin{cor}
Let $\beta$ be a Pisot number, then for any expansion generating function $F$ the $\beta$-expansion generated by $F$ is eventually periodic if and only if $x\in \mathbb{Q}(\beta)\cap I_{\beta,m}.$ 
\end{cor}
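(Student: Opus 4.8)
The plan is to unwind the definitions so that the corollary becomes a direct translation of Theorem~\ref{Schmidt generalisation} through Lemma~\ref{Bijection lemma}. Fixing an expansion generating function $F$ associated to some partition $\{A_{k,l}\}_{k,l}$, I read the statement as: for every $x\in I_{\beta,m}$, the $\beta$-expansion generated by $F$ for $x$ is eventually periodic if and only if $x\in\mathbb{Q}(\beta)\cap I_{\beta,m}$.

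First I would record the elementary observation that the $n$-th digit $\delta_n$ of the $\beta$-expansion generated by $F$ depends only on the orbit point $F^{n-1}(x)$: by construction $\delta_n=\delta_{k,l}$ whenever $F^{n-1}(x)\in A_{k,l}$, so there is a fixed map $g:I_{\beta,m}\to\{0,\dots,m\}$ with $\delta_n=g(F^{n-1}(x))$ for all $n$, and $F(y)=T_{\beta,g(y)}(y)$. Consequently, if $F^{N}(x)=F^{N'}(x)$ for some $N<N'$, then an easy induction gives $F^{N+j}(x)=F^{N'+j}(x)$ for every $j\ge 0$, whence $\delta_{N+1+j}=\delta_{N'+1+j}$ for all $j\ge 0$ and the digit sequence is eventually periodic with period dividing $N'-N$. (Recall also that, via Lemma~\ref{Bijection lemma}, the sequence of maps $(T_{\beta,\delta_n})_{n=1}^{\infty}$ obtained by iterating $F$ is precisely the element of $\Omega_{\beta,m}(x)$ corresponding to the $\beta$-expansion generated by $F$.)

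For the implication $x\in\mathbb{Q}(\beta)\cap I_{\beta,m}\Rightarrow$ eventual periodicity, I would apply Theorem~\ref{Schmidt generalisation} to obtain $x\in Pre(F)$, i.e. the existence of $N<N'$ with $F^{N}(x)=F^{N'}(x)$; the previous paragraph then shows the generated $\beta$-expansion is eventually periodic. For the converse, suppose the generated $\beta$-expansion $(\delta_i)_{i=1}^{\infty}$ is eventually periodic, say $\delta_{i+p}=\delta_i$ for all $i>N$. Then $x$ has a $\beta$-expansion, so $x\in I_{\beta,m}$ automatically, and splitting the defining series into its first $N$ terms plus the geometric tail gives
\[
x=\sum_{i=1}^{N}\frac{\delta_i}{\beta^{i}}+\frac{1}{\beta^{N}}\cdot\frac{\sum_{i=1}^{p}\delta_{N+i}\beta^{p-i}}{\beta^{p}-1}\in\mathbb{Q}(\beta).
\]
Equivalently, one may note that an eventually periodic digit sequence forces the orbit $\{F^{n}(x)\}_{n\ge 0}$ to be pre-periodic and then invoke the inclusion $Pre(F)\subset\mathbb{Q}(\beta)\cap I_{\beta,m}$ already established inside the proof of Theorem~\ref{Schmidt generalisation}.

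I do not expect a genuine obstacle here: the corollary really is immediate once one has Theorem~\ref{Schmidt generalisation} (which rests on Theorem~\ref{First theorem}) and Lemma~\ref{Bijection lemma}. The only point needing a moment's care is the two-way passage between ``the orbit of $x$ under $F$ is pre-periodic'' and ``the $\beta$-expansion generated by $F$ is eventually periodic'', and this is dispatched by the observation that the digits are a function of the orbit points together with the elementary geometric-series computation above.
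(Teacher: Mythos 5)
Your proposal is correct and follows exactly the route the paper intends: the paper states the corollary as an immediate consequence of Theorem \ref{Schmidt generalisation} and Lemma \ref{Bijection lemma}, and your argument simply fills in the routine details (digits are a function of the $F$-orbit, so pre-periodicity of the orbit gives eventual periodicity of the expansion, and the geometric-series computation gives the converse). No discrepancy with the paper's approach.
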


\section{The growth rate and Hausdorff dimension of $\Sigma_{\beta,m}(x)$}
In this section we study the growth rate and Hausdorff dimension of the set of $\beta$-expansions. Let $\beta\in (1,m+1]$ be some arbitrary number not necessarily Pisot, we assume that $x\in I_{\beta,m}$ satisfies $S_{\beta,m}(x)=\{\gamma_{j}\}_{j=1}^{k}$. Our motivation for this finiteness condition comes from Theorem \ref{First theorem} but as we will see the following results do not require any assumptions on the algebraic properties of $\beta$ or $x.$ However, we remark that we are unaware of any non-trivial examples where $\beta$ is not Pisot and there exists $x\in I_{\beta,m}$ such that $S_{\beta,m}(x)$ is finite. In what follows we assume that our enumeration of the set $\{\gamma_{j}\}_{j=1}^{k}$ is such that $\gamma_{1}=x.$

Recall the following from \cite{Baker2}, let $$\mathcal{N}_{\beta,m,n}(x)=Card(\mathcal{E}_{\beta,m,n}(x))$$ and define the \textit{growth rate of $\beta$-expansions} to be $$\lim_{n\to\infty} \frac{\log_{m+1}\mathcal{N}_{\beta,m,n}(x)}{n},$$ when this limit exists. When this limit does not exist we can consider the \textit{lower and upper growth rates of $\beta$-expansions}, these are defined to be $$\liminf_{n\to\infty} \frac{\log_{m+1}\mathcal{N}_{\beta,m,n}(x)}{n}\textrm{ and }\limsup_{n\to\infty} \frac{\log_{m+1}\mathcal{N}_{\beta,m,n}(x)}{n}$$ respectively. 
We endow $\{0,\ldots,m\}^{\mathbb{N}}$ with the metric $d(\cdot,\cdot)$ defined as follows:
\[ d(x,y) = \left\{ \begin{array}{ll}
         (m+1)^{-n(x,y)} & \mbox{if $x\neq y,$ where $n(x,y)=\inf \{i:x_{i}\neq y_{i}\} $}\\
        0 & \mbox{if $x=y$.}\end{array} \right. \]We can consider the Hausdorff dimension of $\Sigma_{\beta,m}(x)$ with respect to this metric. It is a simple exercise to show that following inequalities hold:
\begin{equation}
\label{dimension inequality}
\dim_{H}(\Sigma_{\beta,m}(x))\leq \liminf_{n\to\infty} \frac{\log_{m+1}\mathcal{N}_{\beta,m,n}(x)}{n}\leq\limsup_{n\to\infty} \frac{\log_{m+1}\mathcal{N}_{\beta,m,n}(x)}{n}.
\end{equation} 
These quantities were studied in \cite{Baker,Baker2,FengSid,Kempton}. In \cite{Baker} it was shown that for each $m\in\mathbb{N}$ there exists $\mathcal{G}(m)\in\mathbb{R}$ such that, for $\beta\in(1,\mathcal{G}(m))$ and $x\in(0,\frac{m}{\beta-1})$ the Hausdorff dimension of $\Sigma_{\beta,m}(x)$ can be bounded below by some strictly positive function depending only on $\beta$ and $m.$

We define the \textit{transition matrix associated to $x$} to be the $k\times k$ matrix $A,$ where $A$ satisfies
\[ (A)_{q,j} = \left\{ \begin{array}{ll}
         1 & \mbox{if there exists $i\in\{0,\ldots,m\}$ such that $T_{\beta,i}(\gamma_{q})=\gamma_{j}$}\\
        0 & \mbox{otherwise.}\end{array} \right. \]

Let $N(\gamma_{q},\gamma_{j},n)=Card(\{a\in \Omega_{\beta,m,n}(x) \textrm{ such that } a(\gamma_{q})=\gamma_{j}\}).$ The following proposition is immediate.

\begin{prop}
\label{Counting prop}
Let $e_{j}$ denote the $(k\times 1)$ column vector that is $1$ in the $j$-th entry and $0$ in all other entries. Then $N(\gamma_{q},\gamma_{j},n)=(A^{n}e_{j})_{q}$ and $\mathcal{N}_{\beta,m,n}(\gamma_{q})=\sum_{j=1}^{k}(A^{n}e_{j})_{q}.$

\end{prop}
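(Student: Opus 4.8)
The plan is to interpret the quantity $N(\gamma_q, \gamma_j, n)$ combinatorially as a count of length-$n$ paths in the weighted directed graph whose vertices are $\{\gamma_1, \dots, \gamma_k\}$ and whose edges are encoded by the transition matrix $A$, and then to prove the formula by induction on $n$. First I would observe that the set $S_{\beta,m}(x) = \{\gamma_j\}_{j=1}^k$ is closed under applying any $T_{\beta,i}$ that keeps us inside $I_{\beta,m}$: indeed if $\gamma_q = a(x)$ for some $a \in \Omega_{\beta,m,\ell}(x)$ and $T_{\beta,i}(\gamma_q) \in I_{\beta,m}$, then $T_{\beta,i} \circ a \in \Omega_{\beta,m,\ell+1}(x)$, so $T_{\beta,i}(\gamma_q) \in S_{\beta,m}(x)$, i.e.\ it equals some $\gamma_j$; this is exactly the content of the $(q,j)$ entry of $A$ being $1$. (I should also note that distinct choices of $i$ landing on the same $\gamma_j$ do not happen, since $T_{\beta,i}(\gamma_q) = T_{\beta,i'}(\gamma_q)$ forces $i = i'$; this is what makes $A$ a genuine $0$--$1$ matrix and makes the edge-count equal to the digit-count.)

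For the base case $n = 1$: $N(\gamma_q, \gamma_j, 1)$ counts single maps $T_{\beta,i}$ with $T_{\beta,i}(\gamma_q) = \gamma_j$, which by the previous paragraph is $1$ if $(A)_{q,j} = 1$ and $0$ otherwise, i.e.\ $(A e_j)_q$. For the inductive step, decompose any $a^{(n+1)} \in \Omega_{\beta,m,n+1}(x)$ with $a^{(n+1)}(\gamma_q) = \gamma_j$ according to its last map: write $a^{(n+1)} = T_{\beta,i} \circ a^{(n)}$ where $a^{(n)} \in \Omega_{\beta,m,n}(x)$ and $a^{(n)}(\gamma_q) = \gamma_r$ for some intermediate state $\gamma_r \in S_{\beta,m}(x)$ (it lies in this set because all partial orbits stay in $I_{\beta,m}$ by definition of $\Omega_{\beta,m,n}(x)$, hence in $S_{\beta,m,\cdot}(x) \subseteq S_{\beta,m}(x)$), and $T_{\beta,i}(\gamma_r) = \gamma_j$. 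Summing over the intermediate state $r$ and using the induction hypothesis together with the base case gives
\[
N(\gamma_q, \gamma_j, n+1) = \sum_{r=1}^k N(\gamma_q, \gamma_r, n)\, N(\gamma_r, \gamma_j, 1) = \sum_{r=1}^k (A^n e_r)_q (A e_j)_r = (A^{n+1} e_j)_q,
\]
which closes the induction. The statement $\mathcal{N}_{\beta,m,n}(\gamma_q) = \sum_{j=1}^k (A^n e_j)_q$ then follows immediately: by Lemma~\ref{Bijection lemma} we have $\mathcal{N}_{\beta,m,n}(\gamma_q) = Card(\Omega_{\beta,m,n}(\gamma_q))$, and every $a \in \Omega_{\beta,m,n}(\gamma_q)$ satisfies $a(\gamma_q) \in I_{\beta,m}$, hence $a(\gamma_q) \in S_{\beta,m}(\gamma_q) \subseteq S_{\beta,m}(x) = \{\gamma_j\}_{j=1}^k$, so partitioning $\Omega_{\beta,m,n}(\gamma_q)$ by the value $a(\gamma_q) = \gamma_j$ yields $\mathcal{N}_{\beta,m,n}(\gamma_q) = \sum_{j=1}^k N(\gamma_q, \gamma_j, n) = \sum_{j=1}^k (A^n e_j)_q$.

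The only genuinely delicate point — the one I would be most careful about — is the well-definedness of this "graph on $S_{\beta,m}(x)$" picture: one must check that $S_{\beta,m}(\gamma_q) = S_{\beta,m}(x)$ for every $q$ (so that iterating $F$-type maps from any $\gamma_q$ never escapes the finite set), and that the transition structure is genuinely independent of which representative $a$ we used to reach a given $\gamma_q$. The first follows because $\gamma_q = a(x)$ with $a \in \Omega_{\beta,m,\ell}(x)$ implies $\Omega_{\beta,m,n}(\gamma_q)$ embeds into $\Omega_{\beta,m,n+\ell}(x)$ via pre-composition with $a$, giving $S_{\beta,m}(\gamma_q) \subseteq S_{\beta,m}(x)$, and conversely any $\gamma_j = a'(x)$ is reachable from $\gamma_q$ precisely when $A$ says so — but for the counting formula we actually only need the containment $S_{\beta,m}(\gamma_q) \subseteq S_{\beta,m}(x)$, which is the clean direction. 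Everything else is the routine path-counting induction above.
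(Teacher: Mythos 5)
Your proof is correct and takes essentially the same route as the paper: the paper reduces the second identity to the first and invokes a ``standard inductive argument'' for $N(\gamma_{q},\gamma_{j},n)=(A^{n})_{q,j}$, which is exactly the path-counting induction you spell out. Your additional checks (that distinct digits $i\neq i'$ give distinct images of $\gamma_{q}$, so $A$ being a $0$--$1$ matrix causes no undercounting, and that intermediate points stay in the finite set $S_{\beta,m}(x)$) are precisely the details the paper leaves implicit.
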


\begin{proof}
As $\mathcal{N}_{\beta,m,n}(\gamma_{q})=\sum_{j=1}^{k}N(\gamma_{q},\gamma_{j},n)$ it suffices to show that the first statement holds. It is a standard inductive argument to show that $$N(\gamma_{q},\gamma_{j},n)=(A^{n})_{q,j},$$ our result then follows from the observation that $(A^{n})_{q,j}=(A^{n}e_{j})_{q}$

\end{proof}

We now give conditions under which we have equality in (\ref{dimension inequality}) and can explicitly compute $\dim_{H}(\Sigma_{\beta,m}(x))$ and the growth rate of $\beta$-expansions. Let $A$ be the transition matrix associated to $\{\gamma_{j}\}_{j=1}^{k}$ as above. As $A$ is a non-negative matrix with non-zero entries it has a positive real eigenvalue $\alpha$ with non-negative eigenvector $v_{\alpha},$ such that $Spec(A)=\alpha.$ It maybe the case that there exists other possibly complex eigenvalues $\alpha_{i}$ such that $|\alpha_{i}|=\alpha.$ This is the case we want to avoid, as such we introduce the following condition. We say that $A$ satisfies \textit{condition $1$} if $A$ has a positive real eigenvalue $\alpha$  with non-negative eigenvector $v_{\alpha}$ such that $|\alpha_{i}|<\alpha$ for all other eigenvalues. Condition $1$ is satisfied if for every $\gamma_{i},\gamma_{j}\in S_{\beta,m}(x)$ there exists a finite sequence of maps $a_{i,j}$ such that $a_{i,j}(\gamma_{i})=\gamma_{j},$ and if $p_{i}$ is the minimum number of transformations required to map $\gamma_{i}$ to $\gamma_{i},$ then $gcd(\{p_{i}\})=1$. This is a consequence of the Perron-Frobenius theorem for primitive matrices.

\begin{thm}
\label{Dimension thm}
Let $m\in\mathbb{N},$ $\beta\in(1,m+1]$ and $x\in I_{\beta,m}.$ Assume $S_{\beta,m}(x)=\{\gamma_{j}\}_{j=1}^{k}$ and the transition matrix $A$ associated to $x$ satisfies condition $1,$ then
$$\dim_{H}(\Sigma_{\beta,m}(x))=\lim_{n\to\infty} \frac{\log_{m+1}\mathcal{N}_{\beta,m,n}(x)}{n}=\log_{m+1}\alpha.$$ 
\end{thm}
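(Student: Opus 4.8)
The plan is to first establish the upper growth rate bound, then the Hausdorff dimension lower bound, and finally invoke (\ref{dimension inequality}) to sandwich everything. By Proposition \ref{Counting prop} we have $\mathcal{N}_{\beta,m,n}(x)=\mathcal{N}_{\beta,m,n}(\gamma_{1})=\sum_{j=1}^{k}(A^{n}e_{j})_{1}=(A^{n}\mathbf{1})_{1}$, where $\mathbf{1}$ is the all-ones column vector. Since $A$ satisfies condition $1$, the Perron eigenvalue $\alpha$ is simple and strictly dominates the moduli of all other eigenvalues, so by the standard Jordan-form / power-iteration estimate we have $A^{n}=\alpha^{n}(P+E_{n})$ where $P$ is the rank-one projection onto the $\alpha$-eigenspace and $\|E_{n}\|\to 0$. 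Hence $\mathcal{N}_{\beta,m,n}(x)=\alpha^{n}(c+o(1))$ for some constant $c$; here $c=(P\mathbf{1})_{1}>0$ because the left Perron eigenvector is strictly positive (condition $1$) and $\mathbf{1}$ has positive inner product with it, while $(P\mathbf{1})_{1}$ picks out the first coordinate of the right Perron eigenvector $v_{\alpha}$, which is positive since $\gamma_{1}=x$ communicates with itself. Taking $\log_{m+1}$ and dividing by $n$ yields $\lim_{n\to\infty}\frac{\log_{m+1}\mathcal{N}_{\beta,m,n}(x)}{n}=\log_{m+1}\alpha$, which already establishes the equality of the lower and upper growth rates and identifies their common value.

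It remains to prove $\dim_{H}(\Sigma_{\beta,m}(x))\geq\log_{m+1}\alpha$, since the reverse inequality is given by (\ref{dimension inequality}). The approach is to construct a Cantor-like subset of $\Sigma_{\beta,m}(x)$ together with a suitable measure on it and apply the mass distribution principle. Concretely, I would work with the subshift $\Sigma\subseteq\{0,\ldots,m\}^{\mathbb{N}}$ consisting of those sequences $(\epsilon_{i})$ such that the associated sequence of maps stays inside $S_{\beta,m}(x)$ when started at $x$; by Lemma \ref{Bijection lemma} and the definition of the transition matrix, this is exactly $\Sigma_{\beta,m}(x)$ viewed as a (sofic/topological Markov) shift whose allowed transitions between the states $\gamma_{1},\ldots,\gamma_{k}$ are recorded by $A$. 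On this shift one places the Parry measure: the Markov measure with transition probabilities $p_{qj}=\frac{A_{qj}(v_{\alpha})_{j}}{\alpha(v_{\alpha})_{q}}$ and appropriate initial distribution concentrated on the state $\gamma_{1}=x$. A cylinder of length $n$ in $\Sigma_{\beta,m}(x)$ has diameter $(m+1)^{-n}$ in the metric $d$, and the Parry measure of such a cylinder is comparable to $C\alpha^{-n}$ uniformly (the constants coming from $\min_j (v_\alpha)_j$ and $\max_j (v_\alpha)_j$, both positive and finite). The mass distribution principle then gives $\dim_{H}(\Sigma_{\beta,m}(x))\geq\frac{\log\alpha}{\log(m+1)}=\log_{m+1}\alpha$, completing the sandwich with (\ref{dimension inequality}) and finishing the proof.

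The main obstacle I anticipate is the bookkeeping needed to make the sofic/Markov presentation of $\Sigma_{\beta,m}(x)$ precise: one must check that every sequence in $\Sigma_{\beta,m}(x)$ really does trace out a legal walk through the states $\{\gamma_j\}$ (so that the $n$-cylinders of $\Sigma_{\beta,m}(x)$ correspond exactly to admissible length-$n$ walks from $\gamma_1$, with no overcounting), and that condition $1$ is precisely what guarantees the Parry measure is well defined (irreducibility is needed for $v_\alpha>0$, and simplicity of $\alpha$ is needed so that the normalising constant behaves) and that the cylinder-measure estimate is uniform. Once the combinatorial dictionary between prefixes, walks, and cylinders is set up cleanly, both the eigenvalue asymptotics and the mass distribution argument are routine; in fact, if one prefers to avoid measures altogether, the lower bound can alternatively be obtained by passing to a primitive power of $A$ and directly estimating the Hausdorff measure of the self-similar-type set of cylinders, but the Parry measure route is cleanest.
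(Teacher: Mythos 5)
Your symbolic dictionary is sound: since the digit $i$ with $T_{\beta,i}(\gamma_q)=\gamma_j$ is uniquely determined by the pair $(\gamma_q,\gamma_j)$, the cylinders of $\Sigma_{\beta,m}(x)$ do correspond bijectively to walks from $\gamma_1$ in the graph with adjacency matrix $A$, and your Parry-measure/mass-distribution route would give $\dim_H(\Sigma_{\beta,m}(x))\geq\log_{m+1}\alpha$ \emph{provided} $v_\alpha$ is strictly positive. The gap is that condition $1$, as defined in the paper, does not give you this: it asks only for a \emph{non-negative} eigenvector $v_\alpha$ together with the spectral gap $|\alpha_i|<\alpha$; irreducibility (or primitivity) of $A$ is mentioned only as a sufficient condition for condition $1$, not as part of it. Consequently your assertions that the left Perron eigenvector is strictly positive ``by condition $1$'', that $(v_\alpha)_1>0$ because $\gamma_1=x$ ``communicates with itself'' (no such communication is assumed), and the definition of the transition probabilities $p_{qj}=A_{qj}(v_\alpha)_j/(\alpha(v_\alpha)_q)$, which divides by $(v_\alpha)_q$ for \emph{every} state, are all unjustified under the stated hypothesis; likewise your constant $c=(P\mathbf{1})_1$ could a priori vanish, so the growth-rate computation at $x$ is also incomplete. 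You flag the irreducibility issue yourself but do not close it, and it is precisely where the argument breaks for a reducible $A$ satisfying condition $1$.

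The missing idea, and the way the paper's proof avoids the issue, is a relay through reachability: by the definition of $S_{\beta,m}(x)$, every $\gamma_i$ is the image of $x$ under some fixed composition of length $n_i$, so it suffices to find \emph{one} index $i$ with $(v_\alpha)_i>0$ (guaranteed since $v_\alpha\neq 0$ is non-negative), prove $\mathcal{N}_{\beta,m,n}(\gamma_i)\geq C\alpha^{n}$ there, and transfer to $x$ via $\mathcal{N}_{\beta,m,n+n_i}(x)\geq\mathcal{N}_{\beta,m,n}(\gamma_i)$; this is exactly Lemma \ref{Growth lemma}. The paper then obtains the dimension lower bound not with a measure but with a Falconer-style covering/counting argument using only the two-sided bounds of Lemma \ref{Growth lemma}, namely $C_x\alpha^n\leq\mathcal{N}_{\beta,m,n}(x)$ and $\mathcal{N}_{\beta,m,n}(\gamma_j)\leq D\alpha^n$ for all $j$, so strict positivity of $v_\alpha$ is never needed. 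Your measure-theoretic route can be repaired in the same spirit: restrict the Parry chain to $R=\{q:(v_\alpha)_q>0\}$ (which is closed under the $v_\alpha$-weighted transitions, since $\sum_j A_{qj}(v_\alpha)_j=\alpha(v_\alpha)_q$), start it at a state $\gamma_i$ with $i\in R$, and push the resulting measure into $\Sigma_{\beta,m}(x)$ by prepending a fixed $n_i$-prefix taking $x$ to $\gamma_i$; with that modification the mass-distribution argument goes through and is a genuine alternative to the paper's covering argument. As written, however, your proposal proves the theorem only under the stronger assumption that $A$ is irreducible.
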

Before proving Theorem \ref{Dimension thm} we require the following lemma.
\begin{lemma}
\label{Growth lemma}
Under the hypothesis of Theorem \ref{Dimension thm} there exists $C_{x}>0$ and $D>0$ such that $$C_{x}\alpha^{n}\leq \mathcal{N}_{\beta,m,n}(x)$$ and $$\mathcal{N}_{\beta,m,n}(\gamma_{j})\leq D\alpha^{n}$$ for all $\gamma_{j}\in S_{\beta,m}(x)$ and $n\in\mathbb{N}$.
\end{lemma}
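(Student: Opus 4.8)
The plan is to pass to the transition matrix $A$ and then extract the exponential growth of $\mathcal{N}_{\beta,m,n}$ from the Perron--Frobenius structure that condition $1$ encodes. Writing $\mathbf{1}=\sum_{j=1}^{k}e_{j}$ for the all-ones column vector, Proposition \ref{Counting prop} gives $\mathcal{N}_{\beta,m,n}(\gamma_{q})=\sum_{j=1}^{k}(A^{n}e_{j})_{q}=(A^{n}\mathbf{1})_{q}$ for every $q$ and $n$, and in particular $\mathcal{N}_{\beta,m,n}(x)=(A^{n}\mathbf{1})_{1}$ since $\gamma_{1}=x$. So the lemma amounts to showing that the sequence of non-negative vectors $(\alpha^{-n}A^{n}\mathbf{1})_{n\ge 0}$ is uniformly bounded above, and has first coordinate bounded below away from $0$.

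For the upper bound I would invoke the Perron--Frobenius theorem for the non-negative matrix $A$: by condition $1$ (equivalently, via the primitivity criterion stated before Theorem \ref{Dimension thm}), $\alpha$ is the spectral radius of $A$, it is semisimple, and it is the unique eigenvalue of maximal modulus, so $\alpha^{-n}A^{n}$ converges, as $n\to\infty$, to the spectral projection $P_{\alpha}$ onto the $\alpha$-eigenspace. Being a limit of the non-negative matrices $\alpha^{-n}A^{n}$, the matrix $P_{\alpha}$ is itself non-negative. Consequently $\alpha^{-n}A^{n}\mathbf{1}\to P_{\alpha}\mathbf{1}=:\mathbf{v}\ge 0$, so the sequence $(\alpha^{-n}A^{n}\mathbf{1})_{n}$ is bounded; choosing $D>0$ larger than $\sup_{n,q}\alpha^{-n}(A^{n}\mathbf{1})_{q}$ (finite, being a convergent sequence together with finitely many initial terms) yields $\mathcal{N}_{\beta,m,n}(\gamma_{j})\le D\alpha^{n}$ for all $\gamma_{j}\in S_{\beta,m}(x)$ and all $n$.

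The delicate point is the lower bound, for which I must show $v_{1}=(P_{\alpha}\mathbf{1})_{1}>0$. First, $\mathbf{v}\neq 0$: the columns $P_{\alpha}e_{j}$ of $P_{\alpha}$ are non-negative and, since $\alpha$ is an eigenvalue, not all zero, so their sum $\mathbf{v}=P_{\alpha}\mathbf{1}$ is a non-negative nonzero vector. Second, $\mathbf{v}$ lies in the $\alpha$-eigenspace, because $A\mathbf{v}=\lim_{n}\alpha^{-n}A^{n+1}\mathbf{1}=\alpha\lim_{n}\alpha^{-(n+1)}A^{n+1}\mathbf{1}=\alpha\mathbf{v}$. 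Now suppose, for a contradiction, that $v_{1}=0$. From $0=\alpha v_{1}=(A\mathbf{v})_{1}=\sum_{j}(A)_{1,j}v_{j}$ and $(A)_{1,j},v_{j}\ge 0$ we get $v_{j}=0$ whenever $(A)_{1,j}=1$, i.e.\ whenever $\gamma_{j}=T_{\beta,i}(\gamma_{1})$ for some $i$; applying the same argument at each such index and iterating shows $v_{j}=0$ for every $\gamma_{j}$ reachable from $\gamma_{1}$ by a finite sequence of maps. But $S_{\beta,m}(x)=\bigcup_{n\ge 0}\{a(x):a\in\Omega_{\beta,m,n}(x)\}$, so every $\gamma_{j}\in S_{\beta,m}(x)$ is reachable from $\gamma_{1}=x$; hence $\mathbf{v}=0$, contradicting the previous sentence. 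Therefore $v_{1}>0$, so $\alpha^{-n}\mathcal{N}_{\beta,m,n}(x)\to v_{1}>0$; picking $C_{x}\in(0,v_{1})$ handles all sufficiently large $n$, and shrinking $C_{x}$ further to absorb the finitely many remaining terms (each $\mathcal{N}_{\beta,m,n}(x)\ge 1>0$) gives $C_{x}\alpha^{n}\le\mathcal{N}_{\beta,m,n}(x)$ for all $n$.

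The main obstacle is the honest exponential (rather than merely sub-exponential) upper bound $\mathcal{N}_{\beta,m,n}(\gamma_{j})\le D\alpha^{n}$: this is exactly the assertion that the Perron eigenvalue $\alpha$ carries no nontrivial Jordan block, and it is precisely here — and in the resulting convergence of $\alpha^{-n}A^{n}$ — that condition $1$ is doing the work. One must be careful to cite the version of Perron--Frobenius that delivers this convergence (as in the primitive case, a direct consequence of Perron--Frobenius for primitive matrices) and not merely the identity $\rho(A)=\alpha$, which by itself would only give the weaker bound $\mathcal{N}_{\beta,m,n}(\gamma_{j})\le C_{\varepsilon}(\alpha+\varepsilon)^{n}$.
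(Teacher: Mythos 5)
Your proof is correct, and it sits inside the same Perron--Frobenius framework as the paper's (first reduce to $\mathcal{N}_{\beta,m,n}(\gamma_q)=(A^{n}\mathbf{1})_{q}$ via Proposition \ref{Counting prop}, then extract the asymptotics of $A^{n}$), but your handling of the lower bound is genuinely different. The paper picks an index $i$ with $(v_{\alpha})_{i}>0$, splits $e_{i}$ into its component along $v_{\alpha}$ plus a remainder which condition $1$ forces to be $o(\alpha^{n})$, deduces $\mathcal{N}_{\beta,m,n}(\gamma_{i})\geq C\alpha^{n}$, and then transfers this bound to $x$ combinatorially: since $\gamma_{i}=a(x)$ for some admissible $a$ of length $n_{i}$, every $n$-prefix of $\gamma_{i}$ extends to an $(n+n_{i})$-prefix of $x$, so $\mathcal{N}_{\beta,m,n+n_{i}}(x)\geq\mathcal{N}_{\beta,m,n}(\gamma_{i})$. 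You instead prove directly that the Perron coefficient at the coordinate of $x$ is positive: $\alpha^{-n}A^{n}\mathbf{1}\to P_{\alpha}\mathbf{1}=:\mathbf{v}\geq 0$, $\mathbf{v}\neq 0$ is an $\alpha$-eigenvector, and if $v_{1}=0$ then non-negativity propagates zeros along the transition graph to every index reachable from $1$ --- which is every index, since each $\gamma_{j}$ is by definition $a(x)$ for some admissible $a$ and all intermediate images stay in $I_{\beta,m}$, hence in $S_{\beta,m}(x)$. The same reachability fact is what powers the paper's transfer inequality, so the two arguments consume the same input; yours buys the sharper conclusion $\mathcal{N}_{\beta,m,n}(x)\sim v_{1}\alpha^{n}$ and avoids the shift by $n_{i}$, while the paper's avoids any discussion of the first coordinate of the spectral projection. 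Note that both proofs (including the paper's ``simple linear algebra argument'' for $D$) require condition $1$ to be read as saying that $\alpha$ is a semisimple, strictly dominant eigenvalue --- e.g.\ via the primitivity criterion stated before Theorem \ref{Dimension thm} --- so that $\alpha^{-n}A^{n}$ stays bounded; you are right to flag that the bare inequality $|\alpha_{i}|<\alpha$ for the other eigenvalues would not by itself exclude a Jordan block at $\alpha$, and your explicit appeal to the convergence of $\alpha^{-n}A^{n}$ makes this dependence visible rather than hidden.
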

\begin{proof}
The existence of $D$ follows by a simple linear algebra argument and Proposition \ref{Counting prop}. It remains to show the existence of $C_{x}.$ Let $i\in\{1,\ldots,k\}$ be such that $(v_{\alpha})_{i}>0$. By Proposition \ref{Counting prop} we have that 
\begin{align*}
\mathcal{N}_{\beta,m,n}(\gamma_{i})\geq (A^{n}e_{i})_{i}&=(A^{n}(proj_{v_{\alpha}}(e_{i}))+A^{n}(e_{i}-proj_{v_{\alpha}}(e_{i})))_{i}\\
&=(\alpha^{n}(proj_{v_{\alpha}}(e_{i}))+A^{n}(e_{i}-proj_{v_{\alpha}}(e_{i})))_{i}\\
&=(\alpha^{n}(proj_{v_{\alpha}}(e_{i})))_{i}+(A^{n}(e_{i}-proj_{v_{\alpha}}(e_{i})))_{i}.
\end{align*} Here $proj_{v_{\alpha}}$ denotes the projection onto the eigenvector $v_{\alpha}$, since $(v_{\alpha})_{i}>0$ it follows that $proj_{v_{\alpha}}(e_{i})$ is nonzero and by condition 1 there exists $C>0$ such that 
\begin{equation}
\label{Some equation}
\mathcal{N}_{\beta,m,n}(\gamma_{i})\geq C \alpha^{n}.
\end{equation} There exists a sequence of transformations $a$ of length $n_{i}$ such that $a(x)=\gamma_{i},$ therefore $\mathcal{N}_{\beta,m,n+n_{i}}(x)\geq\mathcal{N}_{\beta,m,n}(\gamma_{i}).$ By (\ref{Some equation}) we can conclude that $\mathcal{N}_{\beta,m,n+n_{i}}(x)\geq C \alpha^{n},$ for all $n\in\mathbb{N},$ our result follows.
\end{proof}
Applying Lemma \ref{Growth lemma} we can conclude that $\frac{\log_{m+1}\mathcal{N}_{\beta,m,n}(x)}{n}=\alpha.$ By (\ref{dimension inequality}) to prove Theorem \ref{Dimension thm} it suffices to show that $\dim_{H}(\Sigma_{\beta,m}(x))\geq \alpha.$ Our method of proof is analogous to that given in \cite{Baker}, which is based upon Example $2.7$ of \cite{Falconer}.
\begin{proof}[Proof of Theorem \ref{Dimension thm}]
As $\Sigma_{\beta,m}(x)$ is a compact set we may restrict to finite covers. Let $\{U_{n}\}_{n=1}^{N}$ be a finite cover of $\Sigma_{\beta,m}(x),$ without loss of generality we may assume that $\textrm{Diam}(U_{n})< \frac{1}{m+1},$ as such for each $U_{n}$ there exists $l(n)\in\mathbb{N}$ such that $$(m+1)^{-(l(n)+1)}\leq \textrm{Diam}(U_{n})< (m+1)^{-l(n)}.$$ It follows that there exists $z^{(n)}\in \{0,\ldots,m\}^{l(n)}$ such that, $y_{i}=z^{(n)}_{i}$ for $1\leq i \leq l(n),$ for all $y\in U_{n}.$ We may assume that $z^{(n)}\in \mathcal{E}_{\beta,m,l(n)}(x),$ if we supposed otherwise then $\Sigma_{\beta,m}(x)\cap U_{n}=\emptyset$ and we can remove $U_{n}$ from our cover. We denote by $C_{n}$ the set of sequences in $\{0,\ldots,m\}^{\mathbb{N}}$ whose first $l(n)$ entries agree with $z^{(n)},$ i.e. $$C_{n}=\Big\{(\epsilon_{i})_{i=1}^{\infty}\in \{0,\ldots,m\}^{\mathbb{N}}: \epsilon_{i}= z^{(n)}_{i}\textrm{ for } 1\leq i\leq l(n)\Big\}.$$ Clearly $U_{n}\subset C_{n}$ and therefore the set $\{C_{n}\}_{n=1}^{N}$ is a cover of $\Sigma_{\beta,m}(x).$

Since there are only finitely many elements in our cover there exists $J\in\mathbb{N}$ such that $(m+1)^{-J}\leq \textrm{Diam}(U_{n})$ for all $n$. We consider the set $\mathcal{E}_{\beta,m,J}(x).$ Since $\{C_{n}\}_{n=1}^{N}$ is a cover of $\Sigma_{\beta,m}(x)$ each $a\in\mathcal{E}_{\beta,m,J}(x)$ satisfies $a_{i}=z^{(n)}_{i}$ for $1\leq i \leq l(n),$ for some $n$. Therefore $$\mathcal{N}_{\beta,m,J}(x)\leq \sum_{n=1}^{N} Card(\{a\in \mathcal{E}_{\beta,m,J}(x): a_{i}=z^{(n)}_{i} \textrm{ for } 1\leq i\leq l(n)\}).$$Applying Lemma \ref{Growth lemma} the following inequality is immediate;
\begin{equation}
\label{First bounds}
C_{x}\alpha^{J}\leq \sum_{n=1}^{N} Card(\{a\in \mathcal{E}_{\beta,m,J}(x): a_{i}=z^{(n)}_{i} \textrm{ for } 1\leq i\leq l(n)\}).
\end{equation} Each element of the set $\{a\in \mathcal{E}_{\beta,m,J}(x): a_{i}=z^{(n)}_{i} \textrm{ for } 1\leq i\leq l(n)\}$ can be identified with a prefix of length $J-l(n)$ for some element of $\{\gamma_{j}\}_{j=1}^{k},$ this is a simple consequence of Lemma \ref{Bijection lemma}. We may therefore apply the second bound from Lemma \ref{Growth lemma}, 
\begin{align*}
\sum_{n=1}^{N} Card(\{a\in \mathcal{E}_{\beta,m,J}(x): a_{i}=z^{(n)}_{i} \textrm{ for } 1\leq i\leq l(n)\})&\leq \sum_{n=1}^{N} D\alpha^{J-l(n)}\\
&= D\alpha^{J+1}\sum_{n=1}^{N} (m+1)^{-(l(n)+1)\log_{m+1}\alpha}\\
&\leq D\alpha^{J+1}\sum_{n=1}^{N} \textrm{Diam}(U_{n})^{\log_{m+1}\alpha}.
\end{align*} Combining the above with (\ref{First bounds}) we have that the following inequality holds; $$C_{x}\alpha^{J}\leq D\alpha^{J+1}\sum_{n=1}^{N} \textrm{Diam}(U_{n})^{\log_{m+1}\alpha}.$$ Dividing through by $D\alpha^{J+1}$ yields $$\sum_{n=1}^{N} \textrm{Diam}(U_{n})^{\log_{m+1}\alpha} \geq \frac{C_{x}}{D\alpha},$$ the right hand side is a constant greater than zero that does not depend on our choice of cover. It follows that $\dim_{H}(\Sigma_{\beta,m}(x))\geq\log_{m+1}\alpha.$ 
\end{proof}

\section{Explicit calculation}
In this section we show how we can explicitly compute $\dim_{H}(\Sigma_{\beta,m}(x))$ and the growth rate of $\beta$ expansions for some $x,\beta$ and $m$. In what follows we assume $\beta\approx 1.53416$ is the Pisot number whose minimial polynomial is given by $z^5-z^3-z^2-z-1,$ $x=\frac{1}{\beta^{2}-1}$ and $m=1.$ It is a simple computation to show that
\begin{align*}
S_{\beta,1}\Big(\frac{1}{\beta^{2}-1}\Big)=\{\gamma_{j}\}_{j=1}^{10}=\Big\{&\frac{1}{\beta^{2}-1},\frac{\beta}{\beta^{2}-1},\frac{1+\beta-\beta^{2}}{\beta^{2}-1},\frac{\beta+\beta^{2}-\beta^{3}}{\beta^{2}-1},\frac{\beta^{2}+\beta^{3}-\beta^{4}}{\beta^{2}-1}\\
&\frac{\beta^{3}+\beta^{4}-\beta^{5}}{\beta^{2}-1},\frac{\beta^{2}}{\beta^{2}-1},\frac{\beta^{3}-\beta^{2}+1}{\beta^{2}-1},\frac{\beta^{4}-\beta^{3}-\beta^{2}+\beta+1}{\beta^{2}-1},\\
&\frac{\beta^{5}-\beta^{4}-\beta^{3}+\beta+1}{\beta^{2}-1}\Big\}
\end{align*} and the matrix $A$ is the $10\times 10$ matrix of the form

\[ A=\left( \begin{array}{cccccccccc}
0&1&1&0&0&0&0&0&0&0\\
1&0&0&0&0&0&1&0&0&0\\
0&0&0&1&0&0&0&0&0&0\\
0&0&0&0&1&0&0&0&0&0\\
0&0&0&0&0&1&0&0&0&0\\
1&0&0&0&0&0&0&0&0&0\\
0&0&0&0&0&0&0&1&0&0\\
0&0&0&0&0&0&0&0&1&0\\
0&0&0&0&0&0&0&0&0&1\\
0&1&0&0&0&0&0&0&0&0
\end{array} \right)\] This matrix has maximal eigenvalue $\kappa\approx 1.325$ with strictly positive eigenvector 
$$v_{\kappa}\approx(0.478,0.478,0.155,0.206,0.273,0.361,0.155,0.206,0.273,0.361).$$ By Theorem \ref{Dimension thm} it follows that 
$$\dim_{H}(\Sigma_{\beta,1}(x))=\lim_{n\to\infty} \frac{\log_{2}\mathcal{N}_{\beta,1,n}(x)}{n}\approx \log_{2} 1.325\approx 0.40599\ldots.$$ This result in fact holds for all $\gamma_{j}\in S_{\beta,1}(\frac{1}{\beta^{2}-1}).$ 
\begin{remark}
In \cite{FengSid} the authors show that if $\beta\in(1,2)$ is a Pisot number, almost every $x\in I_{\beta,1}$ satisfies $$\lim_{n\to\infty} \frac{\log_{2}\mathcal{N}_{\beta,1,n}(x)}{n}=\gamma,$$ where $\gamma<\log_{2}(\frac{2}{\beta}).$ However, when $\beta$ is as above and $x=\frac{1}{\beta^{2}-1}$ we have that $$\lim_{n\to\infty} \frac{\log_{2}\mathcal{N}_{\beta,1,n}(\frac{1}{\beta^{2}-1})}{n}=\log_{2} 1.325>\log_{2}\Big(\frac{2}{\beta}\Big).$$ Their bound cannot therefore be extended to all $x\in (0,\frac{1}{\beta-1}).$
\end{remark}

\noindent \textbf{Acknowledgements} The author would like to thank Nikita Sidorov for his feedback and Kevin Hare for suggesting a shorter proof of Theorem \ref{First theorem}. We would also like to thank the anonymous referee for their useful comments.

\end{document}